\tikzset{elegant/.style={smooth,thick,samples=50,cyan}}
\tikzset{eaxis/.style={->,>=stealth}}
\newtheorem{theorem}{Theorem}[section]
\newtheorem{lemma}[theorem]{Lemma}
\newtheorem{remark}[theorem]{Remark}
\newcommand\supp{\mathrm{supp}}
\newcommand\dl{{\underline{\dim}_{\rm loc}}}
\newcommand\du{{\overline{\dim}_{\rm loc}}}
\newcommand\dm{{{\dim}_{\rm loc}}}
\newcommand\El{\underline{E}}
\newcommand\Eu{\overline{E}}
\author{Zhihui Yuan}
\email{yzhh@hust.edu.cn}
\title{Multifractal spectra of Moran measures without local dimension}
\date{}
\begin{document}
	\maketitle

\begin{abstract}
	A measure without local dimension is a measure such that local dimension does not exist for any point in its support.  In this paper, we construct such a class of Moran measures and study their lower and upper local dimensions. We show that the related "free energy" function ($L^q$-spectrum) does not exist. Nevertheless, we can obtain the full Hausdroff and packing dimension spectra for level sets defined by lower and upper local dimensions. They can be viewed as a generalized multifractal formalism. 
	  	
\end{abstract}

\section{Introduction}

Multifractal analysis is a natural framework to finely describe geometrically the heterogeneity in the distribution at small scales of the measures on a metric space. The multifractal formalism aims at expressing the dimension of the level sets in terms of the Legendre transform of some "free energy" function. The general setting is as follows.

Assume $\mu$ is a probability measure supported on a compact metric space $(X,d)$. The local  dimension of $\mu$ at $x$ is
$$
\dm(\mu,x):=\lim_{r\to 0^+}\frac{\log \mu(B(x,r))}{\log r},
$$
if the limit exists. For $\alpha\in \mathbb{R}$, define the level set as 
$$
E(\mu,\alpha):=\{x\in \supp(\mu):\dm(\mu,x)=\alpha \}.
$$
The multifractal analysis is to determine these $\alpha$ such that $E(\mu,\alpha)\ne \emptyset$ and compute the dimension of $E(\mu,\alpha)$ which is called dimension spectra. In many classical cases, the dimension spectra is related to the so-called ``free energy" function $\tau_{\mu}$ (also called by $L^q$-spectrum), which is defined as follows:
\begin{equation*}\label{Lq}
\tau_{\mu}(s)=\lim_{r\rightarrow 0}\frac{\log \sup\{\sum_{i}(\mu(B_i))^s\}}{\log r},
\end{equation*}
where the supremum is taken over all families of disjoint closed balls $B_i=B(x_i,r)$ of radius $r$ with centers in $\supp(\mu)$.  Of course it is only meaningful when the limit exists. We will say that the multifractal formalism is valid if 
$$
\dim_H E(\mu,\alpha)=\tau_\mu^\ast (\alpha),
$$
where $f^*$ denotes the Legendre transform (i.e. $f^*(\alpha)=\inf_{q\in\mathbb{R}}\{\alpha q-f(q)\}$) and a negative dimension  means that the set is empty.

For many classical systems, such as Cookie-Cutter systems with a Gibbs measure~\cite{Rand1989}, subshift of finite type with a weak Gibbs measure ~\cite{FO2003}, self-similar set with a self-similar measure~\cite{FL2009},random weak Gibbs measure~\cite{Yuan2017} and also ~\cite{BMP1992,CM1992,Falconer1997,Testud2006}, the multifractal formalism is known to hold.  On the other hand, it is also known that for some ``bad" measure, the multifractal formalism does not hold. In \cite{BBH2002}, the authors  construct a measure such that  the multifractal formalism is nowhere valid.  The breakdown of multifractal formalism is attributed to the fact that $\tau_{\mu}$ does not eixst.

When $\tau_\mu$ does not exist, one can define 
\begin{equation*}\label{Lql}
\underline{\tau}_{\mu}(s)=\liminf_{r\rightarrow 0}\frac{\log \sup\{\sum_{i}(\mu(B_i))^s\}}{\log r},\ \ \text{ and}\ \  \overline{\tau}_{\mu}(s)=\limsup_{r\rightarrow 0}\frac{\log \sup\{\sum_{i}(\mu(B_i))^s\}}{\log r}
\end{equation*}
respectively.

 In \cite{BP2013}, the authors provide an example of a measure on the interval $[0,1]$ for which the functions $\underline{\tau}_{\mu}$ and $\overline{\tau}_{\mu}$ differ and the Hausdorff dimensions of the sets $E(\mu,\alpha)$ are given by the Legendre transform of $\overline{\tau}_{\mu}$, and their packing dimensions by the Legendre transform of $\underline{\tau}_{\mu}$ on a subset of the admissible interval. \cite{Shen2015} intensifies \cite{BP2013} such that the function $\underline\tau_{\mu}$ and $\overline{\tau}_{\mu}$ can be real analytic.
 
 In \cite{Qu2018}, the author studies  the density of states measure of certain Sturm Hamiltonians. He shows that such class of measures is always exact upper and lower dimensional. Here, we call a measure $\mu$ is exact upper dimensional if $\mu$-almost every point $x$, $\du(\mu,x)=C_1$ for some constant $C_1$, also we call a measure $\mu$ is exact lower dimensional if $\mu$-almost every point $x$, $\dl(\mu,x)=C_2$ for some constant $C_2$. His example suggests the possibility that the local dimension might do not exist for any point in the support. This motivates our construction of the present paper. Indeed,
 our model can serve as a toy model of it, but our results may throw light on its multifractal analysis.
 
 When the local dimension does not exist, we can define the lower and upper local dimensions of $\mu$, i.e.
$$\dl(\mu,x)=\liminf_{r\to 0^+}\frac{\log \mu(B(x,r))}{\log r}\ \  \text{ and }\ \  \du(\mu,x)=\limsup_{r\to 0^+}\frac{\log \mu(B(x,r))}{\log r}.$$ 
For $\alpha\in \mathbb{R}$, we define  
$$\El(\mu,\alpha)=\{x\in \supp(\mu) :\dl(\mu,x)=\alpha\},$$
$$\Eu(\mu,\alpha)=\{x\in \supp(\mu) :\du(\mu,x)=\alpha\}.$$
Then we have 
$$E(\mu,\alpha)=\El(\mu,\alpha)\cap\Eu(\mu,\alpha).$$
We can further decompose $\El(\mu,\alpha)$ and $\Eu(\mu,\alpha)$ in the following way.
 For any $\alpha'\ge \alpha,$ define
 $$E(\mu,\alpha,\alpha'):=\{x\in \supp(\mu):\dl(\mu,x)=\alpha,\du(\mu,x)=\alpha'\},$$
 Then we have $E(\mu,\alpha,\alpha')=\El(\mu,\alpha)\cap\Eu(\mu,\alpha')$ and 
 $$
 \El(\mu,\alpha)=\bigcup_{\alpha'\ge\alpha}E(\mu,\alpha,\alpha'), \ \  \  \ \Eu(\mu,\alpha')=\bigcup_{\alpha\le\alpha'}E(\mu,\alpha,\alpha').
 $$
 
 In this paper, we will construct a class of Moran measures   such that the free energy function does not exist, and that the local dimension  does not exist  for each point in its support. Then, we will compute the dimensions of $\El(\mu,\alpha),\ \Eu(\mu,\alpha)$ and $E(\mu,\alpha,\alpha')$ and indicate that some generalized multifractal formalism holds.

The paper is organized as follows. In section 2, we present our model and state main results. Section 3 provides the basic properties for a more general model which will be useful in the section 4 where we prove our results.

\section{The Construction and main results}\label{section: The Construction and main results}

Let $\mathcal{A}=\{0,1\}$ and $\mathcal{A}^*$ be the set of all finite words on the alphabet $\mathcal{A}$.

Fix two real numbers $A,B$ with $A>B>2$. Let $\mathcal{N}:=\{N_i\}_{i\in\mathbb{N}}$ be an increasing sequence of integers. Define a set $X(A,B,\mathcal{N})$ as follows.

\begin{enumerate}
	\item [Step 1:] Let $I=[0,1]$. For $n=1$, define:
	$I_{0}=[0,1/A]\subset I,$ and $I_{1}=[1-1/A,1]\subset I.$
	\item [Step 2:] For $n\in\mathbb{N}$ with $n>1$, we assume that for all $w\in \mathcal{A}^{n-1}$, the set $I_{w}$ has been defined. Let $x_w$ be the left endpoint of $I_{w}$. 
	\begin{itemize}
		\item If $N_{2i}<n\leq N_{2i+1}$ for some $i\in\mathbb{N}$, define 
		$I_{w\ast 0}=[x_w,x_w+\frac{|I_{w}|}{A}]$ and $I_{w\ast 1}=[x_w+|I_{w}|-\frac{|I_{w}|}{A},x_w+|I_{w}|]$.
        \item If $N_{2i+1}<n\leq N_{2i+2}$ for some $i\in\mathbb{N}$, define 
        $I_{w\ast 0}=[x_w,x_w+\frac{|I_{w}|}{B}]$ and $I_{w\ast 1}=[x_w+|I_{w}|-\frac{|I_{w}|}{B},x_w+|I_{w}|]$.
	\end{itemize}
	\item [Step 3:] Define $X(A,B,\mathcal{N})=\cap_{n\in\mathbb{N}}\cup_{w\in\mathcal{A}^{n}}I_{w}$.
\end{enumerate}

Given two real numbers $p,q$ with $0<p,q\leq 1/2$, we will distribute a probability measure $\mu_{(p,q,\mathcal{N})}$ on $X(A,B,\mathcal{N})$ as follows:

\begin{enumerate}
	\item [Step 1:] Let $\mu_{(p,q,\mathcal{N})}(I)=1$. For $n=1$, define:
	$\mu_{(p,q,\mathcal{N})}(I_0)=p$ and $\mu_{(p,q,\mathcal{N})}(I_1)=1-p$
	\item [Step 2:] For $n\in\mathbb{N}$ with $n>1$, we assume that for all $w\in \mathcal{A}^{n-1}$, the set $\mu_{(p,q,\mathcal{N})}(I_{w})$ has been defined.
	\begin{itemize}
		\item If $N_{2i}<n\leq N_{2i+1}$ for some $i\in\mathbb{N}$, define 
          $\mu_{(p,q,\mathcal{N})}(I_{w\ast0})=p\mu_{(p,q,\mathcal{N})}(I_{w})$ and $\mu_{(p,q,\mathcal{N})}(I_{w\ast 1})=(1-p)\mu_{(p,q,\mathcal{N})}(I_{w})$.
		\item If $N_{2i+1}<n\leq N_{2i+2}$ for some $i\in\mathbb{N}$, define 
          $\mu_{(p,q,\mathcal{N})}(I_{w\ast0})=q\mu_{(p,q,\mathcal{N})}(I_{w})$ and $\mu_{(p,q,\mathcal{N})}(I_{w\ast 1})=(1-q)\mu_{(p,q,\mathcal{N})}(I_{w})$.
	\end{itemize}
	\item [Step 3:] Such a set function $\mu_{(p,q,\mathcal{N})}$ defined on $\cup_{n\in\mathbb{N}}\mathcal{A}^{n}$ can be extended to a probability measure on the whole $\sigma$-algebra by measure extension theorem. We still denote the measure by $\mu_{(p,q,\mathcal{N})}$.
\end{enumerate}

In this paper, we always make the following assumption:
\begin{equation}\label{ass}
\lim_{i\to\infty}\frac{N_{i+1}}{N_{i}}=\infty\ \ \text{ and }\ \ -\frac{\log p}{\log A}<-\frac{\log (1-q)}{\log B}.
\end{equation}
Furthermore, if there is no conflict we always denote $X:=X(A,B,\mathcal{N})$ and $\mu:=\mu_{(p,q,\mathcal{N})}$.

Given $0<p,\tilde p<1,$ define the mixed entropy function
$$
H(\tilde p, p):=-\tilde p \log p- (1-\tilde p)\log(1-p).
$$
We also write $H(p)=H(p,p)$.

\begin{theorem}\label{dim-loc-bound}
	\begin{enumerate}
		\item For each point $x\in X={\rm supp}(\mu)$, one has $$-\frac{\log (1-p)}{\log A}\leq\underline{\dim}_{\rm loc}(\mu,x)\leq -\frac{\log p}{\log A}$$ and $$-\frac{\log (1-q)}{\log B}\leq \overline{\dim}_{\rm loc}(\mu,x)\leq-\frac{\log q}{\log B}.$$
		\item  $\dl(\mu,x)<\du(\mu,x)$ for all $x\in X={\rm supp}(\mu)$.  
				\item For $\mu$-almost every $x\in X={\rm supp}(\mu)$, one has 
		$$\underline{\dim}_{\rm loc}(\mu,x)=\frac{H(p )}{\log A},\ \ \text{ and }\ \ \overline{\dim}_{\rm loc}(\mu,x)=\frac{H(q )}{\log B}.$$ 
		\item $\dim_{H} X=\frac{\log 2}{\log A}<\dim_{P} X=\frac{\log 2}{\log B}$.
	\end{enumerate}
\end{theorem}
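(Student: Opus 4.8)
The plan is to reduce the computation of all four exponents to the behaviour of the \emph{cylinder ratios}
$$R_n(x):=\frac{\log\mu(I_{\omega|_n})}{\log|I_{\omega|_n}|},\qquad x=\bigcap_{n\ge1}I_{\omega|_n},\quad\omega|_n:=\omega_1\cdots\omega_n\in\mathcal A^n .$$
First I would prove the standard separation estimate — presumably what the general results announced for Section~3 provide — that, since $A>B>2$, distinct generation-$n$ basic intervals are separated by gaps comparable to their length, so $\mu(B(x,r))$ is comparable, with multiplicative constants and an index shift depending only on $A,B,p,q$, to $\mu(I_{\omega|_n})$ whenever $|I_{\omega|_n}|\asymp r$; together with $|\log\mu(I_{\omega|_n})|,|\log|I_{\omega|_n}||\to\infty$ this yields, for \emph{every} $x\in X$,
$$\dl(\mu,x)=\liminf_{n\to\infty}R_n(x),\qquad \du(\mu,x)=\limsup_{n\to\infty}R_n(x).$$
Let $a(n)$ (resp.\ $b(n)=n-a(n)$) be the number of generations $\le n$ in an $A$-phase (resp.\ a $B$-phase) and $a_0(n)$ (resp.\ $b_0(n)$) the number of those at which the digit of $x$ is $0$. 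Then $\log|I_{\omega|_n}|=-a(n)\log A-b(n)\log B$ and $\log\mu(I_{\omega|_n})=a_0(n)\log p+(a(n)-a_0(n))\log(1-p)+b_0(n)\log q+(b(n)-b_0(n))\log(1-q)$, so $R_n(x)$ is a \emph{convex combination} of $\alpha_p:=\tfrac{-\log p}{\log A}$, $\beta_p:=\tfrac{-\log(1-p)}{\log A}$, $\alpha_q:=\tfrac{-\log q}{\log B}$, $\beta_q:=\tfrac{-\log(1-q)}{\log B}$ with weights proportional to $a_0(n)\log A,\ (a(n)-a_0(n))\log A,\ b_0(n)\log B,\ (b(n)-b_0(n))\log B$. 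Since $p,q\le1/2$ and by \eqref{ass} one has the strict chain $\beta_p\le\alpha_p<\beta_q\le\alpha_q$.

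For \textbf{(1)} and \textbf{(2)}: membership $R_n(x)\in[\beta_p,\alpha_q]$ for \emph{every} $n$ gives immediately $\dl(\mu,x)\ge\beta_p$ and $\du(\mu,x)\le\alpha_q$. For the two remaining bounds I evaluate $R_n$ at the ends of the homogeneous phases: at $n=N_{2i+1}$ the first half of \eqref{ass} makes $b(N_{2i+1})=o(N_{2i+1})$, so the total $B$-weight is negligible and $R_{N_{2i+1}}(x)$ is, up to $o(1)$, a convex combination of $\alpha_p$ and $\beta_p$, hence $\le\alpha_p+o(1)$; therefore $\dl(\mu,x)\le\alpha_p=-\log p/\log A$. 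Symmetrically, evaluating at $n=N_{2i+2}$ gives $\du(\mu,x)\ge\beta_q=-\log(1-q)/\log B$. This proves (1), and (2) follows at once: $\dl(\mu,x)\le\alpha_p<\beta_q\le\du(\mu,x)$.

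For \textbf{(3)}: under $\mu$ the digits of $x$ are independent, equal to $0$ with probability $p$ at $A$-phase generations and $q$ at $B$-phase generations, so the $A$-phase digits form an i.i.d.\ Bernoulli$(p)$ string and the $B$-phase digits an i.i.d.\ Bernoulli$(q)$ string. By the strong law of large numbers, $\mu$-a.e.\ $x$ satisfies $a_0(n)/a(n)\to p$ and $b_0(n)/b(n)\to q$ (note $a(n),b(n)\to\infty$). Inserting $a_0(n)=p\,a(n)+o(a(n))$, $b_0(n)=q\,b(n)+o(b(n))$ into $R_n$ and using $\log B>0$ to absorb the error, one gets a.e.
$$R_n(x)=\frac{a(n)H(p)+b(n)H(q)}{a(n)\log A+b(n)\log B}+o(1),$$
a convex combination of $H(p)/\log A$ and $H(q)/\log B$ up to $o(1)$; since $H(p)/\log A\le\alpha_p<\beta_q\le H(q)/\log B$ this gives $\dl(\mu,x)\ge H(p)/\log A$ and $\du(\mu,x)\le H(q)/\log B$ a.e. Evaluating the displayed identity at $n=N_{2i+1}$ and $n=N_{2i+2}$ (where, by \eqref{ass}, the weight collapses onto the $A$-, resp.\ the $B$-term) yields $R_{N_{2i+1}}(x)\to H(p)/\log A$ and $R_{N_{2i+2}}(x)\to H(q)/\log B$ a.e., hence the matching inequalities $\dl(\mu,x)\le H(p)/\log A$ and $\du(\mu,x)\ge H(q)/\log B$.

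For \textbf{(4)}: covering $X$ by its $2^n$ generation-$n$ intervals of common length $\delta_n=A^{-a(n)}B^{-b(n)}$ gives $\dim_H X\le\liminf_n\frac{n\log 2}{a(n)\log A+b(n)\log B}$ and $\overline{\dim}_BX\le\limsup_n\frac{n\log 2}{a(n)\log A+b(n)\log B}$. Since $a(n)+b(n)=n$ and $\log A>\log B$, this ratio lies in $[\log 2/\log A,\ \log 2/\log B]$ for all $n$, tends to $\log 2/\log A$ along $n=N_{2i+1}$ and to $\log 2/\log B$ along $n=N_{2i+2}$ (again by \eqref{ass}); hence the liminf equals $\log 2/\log A$ and the limsup equals $\log 2/\log B$, so $\dim_H X\le\log 2/\log A$ and $\dim_P X\le\overline{\dim}_BX\le\log 2/\log B$. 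For the lower bounds I apply the mass distribution principle and its packing counterpart to the uniform Moran measure $\nu:=\mu_{(1/2,1/2,\mathcal N)}$ (which lies in our family, as \eqref{ass} holds with $p=q=1/2$): here $\nu(I_{\omega|_n})=2^{-n}$ exactly, so $R_n(x)=\frac{n\log 2}{a(n)\log A+b(n)\log B}$ for \emph{all} $x$, whence $\dl(\nu,x)\equiv\log 2/\log A$ and $\du(\nu,x)\equiv\log 2/\log B$ on $X=\supp\nu$; this forces $\dim_H X\ge\log 2/\log A$ and $\dim_P X\ge\log 2/\log B$, and the strict inequality is simply $A>B$. The one genuinely delicate point in the whole argument is the ball-to-cylinder reduction of the first paragraph, which must be carried out uniformly across the phase transitions and the long homogeneous runs; after that everything is bookkeeping with the convex-combination identity for $R_n$ together with the super-exponential growth in \eqref{ass}, which makes the contribution of all earlier phases asymptotically invisible at the end of the current one.
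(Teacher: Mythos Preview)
Your proof is correct and follows essentially the same strategy as the paper: reduce balls to cylinders via the strong separation ($B>2$), analyse the cylinder ratio $R_n(x)$ along the phase endpoints $N_{2i+1},N_{2i+2}$ using $N_{i+1}/N_i\to\infty$, and for (4) use the uniform measure $\mu_{(1/2,1/2,\mathcal N)}$ together with Falconer's local-dimension/dimension principle. The one substantive difference is in (3): you split the digit sequence into its $A$-phase and $B$-phase subsequences, each genuinely i.i.d.\ Bernoulli, and apply the classical strong law to each; the paper instead keeps the full (independent but not identically distributed) sequence and invokes a bounded-variance SLLN (Chung's Theorem~5.4.1, appearing here as Theorem~3.1 and Lemma~3.2). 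Your route is slightly more elementary and avoids the need for the general SLLN, at the cost of tracking two limits $a_0(n)/a(n)\to p$, $b_0(n)/b(n)\to q$ separately; the paper's route packages everything into a single limit statement that also serves the auxiliary measures $\mu'$ used later in Theorems~2.4 and~2.7. Your covering argument for the upper bounds in (4) is correct but redundant: once $\underline{\dim}_{\rm loc}(\nu,x)=\log 2/\log A$ and $\overline{\dim}_{\rm loc}(\nu,x)=\log 2/\log B$ hold for \emph{every} $x\in X$, Falconer's Proposition~2.3 gives both the upper and lower bounds for $\dim_H X$ and $\dim_P X$ simultaneously.
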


\begin{remark}
{\rm
(i)  If $p=1/2$, then for any $x\in X$,  $\underline{\dim}_{\rm loc}(\mu,x)=\log 2/\log A$, hence there is no multifractal analysis for $\El(\mu,\alpha)$, since $\El(\mu,\log 2/\log A)=X.$ By the same reason, if $q=1/2$, there is no multifractal analysis for $\Eu(\mu,\alpha)$.

(ii) The assumption $B>2$ is to ensure that the strong separation condition hold. Using this property, it is equivalent to consider balls and cylinders.

(iii)  The second inequality of ~\eqref{ass} ensures item 2 of Theorem~\ref{dim-loc-bound}, so that this model can provide an example of measure such that the local dimension does not exist for each point in its support. Such nontrivial examples have not been given before to the author's best knowledge.

(iv)  (3) is kind of Young's dimension formula. 
}
\end{remark}

Next we study the dimensions of the level sets.

Define the function
\begin{eqnarray*}
	\beta_1:&\mathbb{R}\to& \mathbb{R}\\
	&s\mapsto&-\frac{\log(p^s+(1-p)^s)}{\log A},
\end{eqnarray*}

then 
$$\beta_{1}'(+\infty)=\frac{H(0,p)}{\log A}\le\beta_{1}'(1)=\frac{H(p,p)}{\log A}\le
	\beta_{1}'(0)=\frac{H(1/2,p)}{\log A}\le\beta_{1}'(-\infty)=\frac{H(1,p)}{\log A}.$$
	and define $\tilde{\beta}_1$ which is a revised function of $\beta_{1}$ (see figure \ref{fig:beta1}),
	$$\tilde{\beta}_1(s)=		\begin{cases}
	\beta_1(s)& s\in (-\infty,0]\cup[1,+\infty),\\
	(1-s)\beta_1(0)+s\beta_1(1)& \, s\in (0,1).
	\end{cases}$$ 
	
\begin{figure}[h]  
	\begin{minipage}[t]{0.5\linewidth}  
		\centering  
		\includegraphics[width=2.8in]{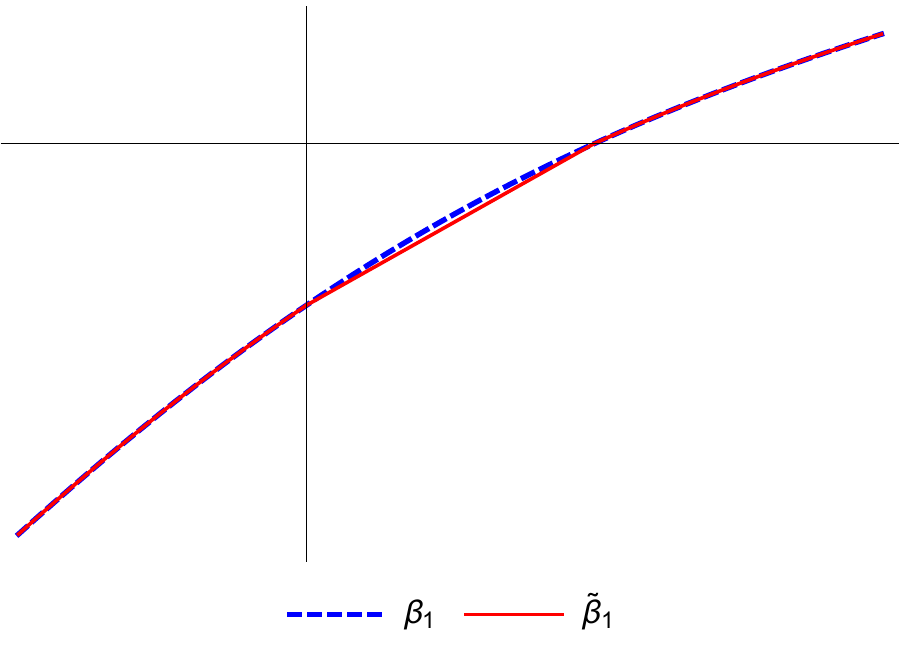}  
		\caption{} 
		\label{fig:beta1}  
	\end{minipage}
	\begin{minipage}[t]{0.5\linewidth}  
		\centering  
		\includegraphics[width=2.8in]{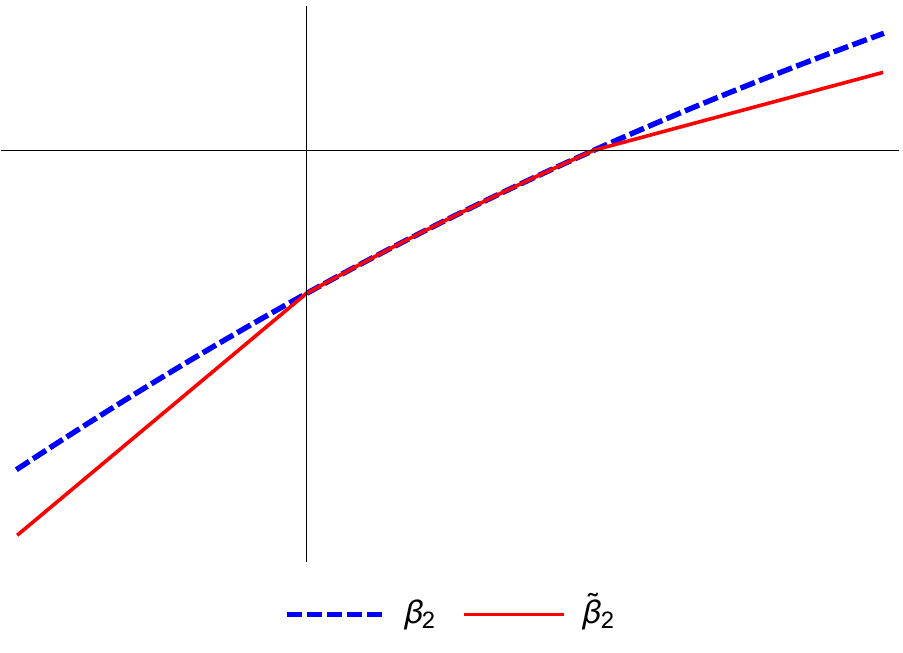}  
		\caption{} 
		\label{fig:beta2}  
	\end{minipage}  
\end{figure} 

Also, define	
\begin{eqnarray*}
	\beta_2:&\mathbb{R}\to& \mathbb{R}\\
	&s\mapsto&-\frac{\log(q^s+(1-q)^s)}{\log B},
\end{eqnarray*}
then 
$$\beta_{2}'(+\infty)=\frac{H(0,q)}{\log B}\le\beta_{2}'(1)=\frac{H(q,q)}{\log B}\le
\beta_{2}'(0)=\frac{H(1/2,q)}{\log B}\le\beta_{2}'(-\infty)=\frac{H(1,q)}{\log B}.$$
and define the revised function of $\beta_{2}$ (see figure \ref{fig:beta2})
$$\tilde{\beta}_2(s)=\begin{cases}
\beta_2(0)+s\beta'_2(0)& \, s\in (-\infty,0),\\
\beta_2(s)& s\in [0,1],\\
\beta_2(1)+\beta'_2(1)(s-1)& \, s\in (1,+\infty).
\end{cases}$$
 
The two functions $\beta_{1},\beta_{2}$ can be seen as the $L^q$-spectrum if we just use the data $(A,p)$ or $(B,q)$.  

\begin{theorem}\label{main}
	\begin{enumerate}
		\item If $\alpha\notin [\beta_{1}'(+\infty),\beta_{1}'(-\infty)]$,\  $\El(\mu,\alpha)=\emptyset$.\\ If $\alpha\notin [\beta_{2}'(+\infty),\beta_{2}'(-\infty)]$, $\Eu(\mu,\alpha)=\emptyset$.
		\item For $\alpha\in [\beta_{1}'(+\infty),\beta_{1}'(-\infty)]$, we have
		$$
		\dim_H \El(\mu,\alpha)=\tilde \beta_1^\ast(\alpha) \ \ \ \text{ and }\ \ \ \dim_{P}\underline{E}(\mu,\alpha)=\frac{\log 2}{\log B}.
		$$
		
		\item For $\alpha\in [\beta_{2}'(+\infty),\beta_{2}'(-\infty)]$, we have
		\begin{equation*}
		\dim_{H}\overline{E}(\mu,\alpha)=\min\{\frac{\log 2}{\log A},\beta_2^*(\alpha)\}
		\end{equation*}
		and 
		\begin{equation*}
		\dim_{P}\overline{E}(\mu,\alpha)=\tilde{\beta}_2^*(\alpha)
		\end{equation*}
	\end{enumerate}
\end{theorem}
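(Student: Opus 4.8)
The plan is to exploit the product-like structure of the Moran measure $\mu$ coming from the alternating blocks governed by $\mathcal{N}$. The key observation is that on the "$(A,p)$-blocks" $N_{2i}<n\le N_{2i+1}$ the measure and geometry behave exactly like a Bernoulli$(p)$ self-similar measure on a Cantor set with contraction ratio $1/A$, while on the "$(B,q)$-blocks" $N_{2i+1}<n\le N_{2i+2}$ they behave like a Bernoulli$(q)$ self-similar measure with ratio $1/B$. Because $N_{i+1}/N_i\to\infty$, each block eventually dominates all previous ones, so along the sequence of scales $r_n=|I_{w|_n}|$ the local behaviour of $\log\mu(B(x,r))/\log r$ is governed, in the liminf, by the frequency of $1$'s that the orbit realizes inside the long $(A,p)$-blocks, and in the limsup by the frequency realized inside the long $(B,q)$-blocks. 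This is precisely why $\dl$ sees $\tilde\beta_1$ and $\du$ sees $\tilde\beta_2$; the general machinery for this (relating cylinder frequencies, local dimensions, and Legendre transforms, and the equivalence of balls and cylinders under the strong separation condition from Remark (ii)) is exactly what Section~3 is set up to provide, so I will invoke it.

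For item (1): an orbit in which the symbol $1$ is chosen with frequency $t$ inside the $(A,p)$-blocks forces $\dl(\mu,x)$ to equal the affine-combination value $-(t\log(1-p)+(1-t)\log p)/\log A = H(t,p)/\log A$, whose range over $t\in[0,1]$ is exactly $[\beta_1'(+\infty),\beta_1'(-\infty)]=[H(0,p)/\log A, H(1,p)/\log A]$; any $\alpha$ outside this closed interval cannot be attained, giving $\El(\mu,\alpha)=\emptyset$, and symmetrically for $\Eu$ with $(B,q)$. For the Hausdorff bound $\dim_H\El(\mu,\alpha)\le\tilde\beta_1^*(\alpha)$ in item (2), I would run the standard covering/large-deviation upper bound: cover $\El(\mu,\alpha)$ by cylinders of generation $N_{2i+1}$ (the end of a long $(A,p)$-block) on which the empirical $1$-frequency is close to the $t=t(\alpha)$ with $H(t,p)/\log A=\alpha$, count them via the entropy $H(t)$, and take $s$-dimensional sums; the supremum of $sH(t,p)/\log A - H(t)/\log A$-type expressions is the Legendre transform, and the "revision" of $\beta_1$ to $\tilde\beta_1$ on $(0,1)$ reflects that when $t$ lies strictly between the two natural endpoints the optimal cover is not the naive one but a mixture, exactly as in the classical Besicovitch/Eggleston computation. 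For the lower bound I would build, for each admissible $\alpha$, a Moran subconstruction (a Cantor-like subset of $X$ specifying the allowed symbols block by block so that the $1$-frequency inside $(A,p)$-blocks converges to $t(\alpha)$ while inside $(B,q)$-blocks it is pinned to a value giving the right $\du$), equip it with a natural measure, and apply the mass distribution principle to get $\dim_H\ge\tilde\beta_1^*(\alpha)$.

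For the packing statements: $\dim_P\El(\mu,\alpha)=\log 2/\log B=\dim_P X$ should follow because $\El(\mu,\alpha)$ is "large" in the packing sense — it contains, for a dense set of scales (the $(B,q)$-blocks, where the branching is into intervals of ratio $1/B$), a full binary tree's worth of choices, so its packing dimension is forced up to that of the ambient set $X$ computed in Theorem~\ref{dim-loc-bound}(4); one inequality is monotonicity, the other is a box-counting estimate along the $(B,q)$-block scales. For $\Eu(\mu,\alpha)$ in item (3), the roles of the two blocks swap: the Hausdorff dimension is the minimum of $\log 2/\log A$ (the dimension of $X$, which no subset can exceed) and the Legendre value $\beta_2^*(\alpha)$ coming from the frequency constraint in the $(B,q)$-blocks — and here $\beta_2$ is used unrevised on $[0,1]$ but the min-with-$\log2/\log A$ encodes the extra "budget" available from the $(A,p)$-blocks where one may branch freely; the packing dimension is $\tilde\beta_2^*(\alpha)$ with $\beta_2$ revised by linear extrapolation outside $[0,1]$, which is again the large-deviation count along $(A,p)$-block scales.

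The main obstacle I anticipate is the lower bounds via explicit subconstructions: because $\dl$ and $\du$ are controlled by different, interleaved families of scales, a single point must \emph{simultaneously} realize a prescribed $1$-frequency in the $(A,p)$-blocks and a (possibly different) one in the $(B,q)$-blocks, and one must verify the local-dimension values do not "leak" across block boundaries — this requires using $N_{i+1}/N_i\to\infty$ quantitatively to show the transition regions between blocks are negligible in the relevant $\log r$ normalization — and then compute the Hausdorff/packing dimension of this two-frequency Moran set, keeping track of which block scales give the dominant contribution to each of the two dimension notions. The bookkeeping that produces exactly $\tilde\beta_1^*$, $\min\{\log2/\log A,\beta_2^*\}$, and $\tilde\beta_2^*$ (rather than naive Legendre transforms of $\beta_1,\beta_2$) is the delicate part, and is where the asymmetry between the $A$- and $B$-phases, together with the second inequality in~\eqref{ass}, must be used.
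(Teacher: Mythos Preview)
Your high-level picture is correct and close to the paper's: the block structure plus $N_{i+1}/N_i\to\infty$ means $\dl$ is governed by the $(A,p)$-blocks and $\du$ by the $(B,q)$-blocks, and the lower bounds come from auxiliary constructions while upper bounds come from large-deviation covering counts. The paper phrases the lower bounds via auxiliary \emph{measures} $\mu'$ on $X$ (choosing alternative weights $p_n'$ and invoking the SLLN-type Lemma~\ref{an limit}) rather than your Cantor-subset constructions, but these are interchangeable. For the upper bounds the paper packages your covering argument through the large-deviation spectra $\underline f_\mu^{LD},\overline f_\mu^{LD}$ of \cite{Barral2015}; in particular on the decreasing branch $\alpha\in[\beta_1'(0),\beta_1'(-\infty)]$ one must cover using the one-sided condition ``eventually $d(\mu,x,n)\ge\alpha-\epsilon$'' (i.e.\ $\underline f_\mu^{LD}(\alpha,+\infty)$, linked to $\overline\tau=\beta_1$ for $s\le 0$), not ``frequency close to $t(\alpha)$'' as you wrote --- a point your sketch glosses over but which is easily repaired.

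There is, however, a genuine gap in your lower-bound plan for $\dim_H\El(\mu,\alpha)$ on the middle range $\alpha\in(\beta_1'(1),\beta_1'(0))$ (and dually for $\dim_P\Eu(\mu,\alpha)$ on $[\beta_2'(+\infty),\beta_2'(1))$). Your proposed subconstruction --- fix the $0$-frequency in each $(A,p)$-block to be $t(\alpha)$ --- gives a set of Hausdorff dimension $H(t(\alpha))/\log A=\beta_1^*(\alpha)$, which is \emph{strictly smaller} than $\tilde\beta_1^*(\alpha)=\min\{\alpha,\log 2/\log A\}$ in this range. You correctly flag this region as ``delicate'' but do not supply the missing mechanism. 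The paper's key idea is that, because the preceding $(B,q)$-block leaves $d(\mu,x,N_{2i})$ well above $\alpha$, one can spend an initial segment $(N_{2i},N_{2i}']$ of each $(A,p)$-block following the $\mu$-typical weight $p_n'=p$ (which pulls $d(\mu,x,n)$ down toward $H(p)/\log A<\alpha$) and then switch to $p_n'=1/2$ (uniform) on $(N_{2i}',N_{2i+1}]$, with $N_{2i}'$ chosen so that $d(\mu,x,n)$ reaches exactly $\alpha$ at the switch and stays $\ge\alpha$ thereafter. This three-phase auxiliary measure concentrates on $\El(\mu,\alpha)$ and has $\dl(\mu',x)=\min\{\alpha,\log 2/\log A\}$, because on the $p$-segment $\mu'=\mu$ (so $d(\mu',x,n)\approx d(\mu,x,n)\ge\alpha$) while on the $1/2$-segment $\mu'$ is uniform (contributing $\log 2/\log A$). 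Without this inhomogeneous-within-block construction your argument cannot reach $\tilde\beta_1^*(\alpha)$; the analogous trick (with $p,q,q^{s}B^{\beta_2(s)}$ in three phases) is needed for $\dim_P\Eu(\mu,\alpha)=\alpha$ when $\alpha<\beta_2'(1)$.
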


Here we draw figure~\ref{fig:local-dimension} and give some remarks to illustrate theorem~\ref{main}.
\begin{figure}[htp!]
	\centering
	\includegraphics[width=0.7\linewidth]{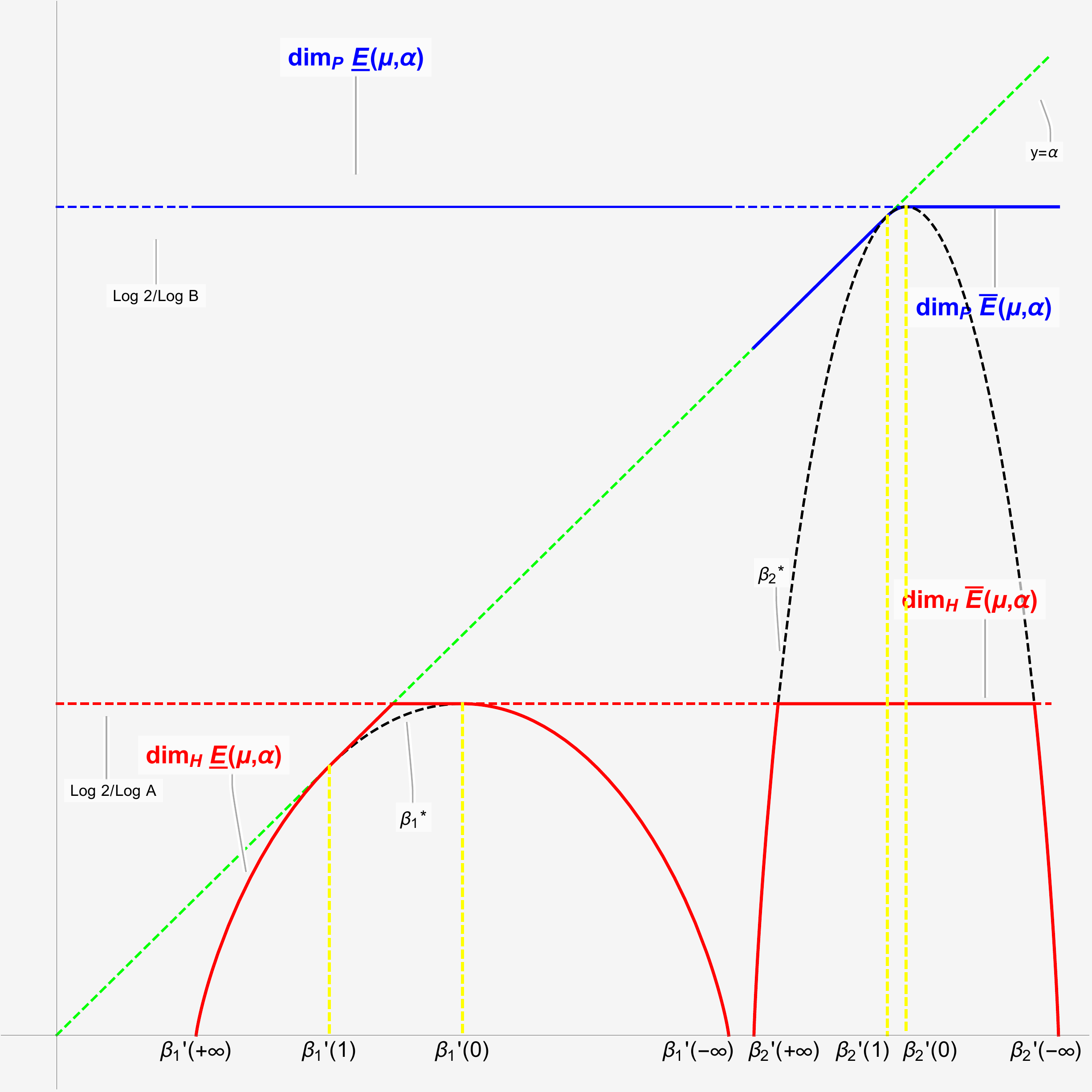}
	\caption{The Hausdorff and packing dimension for $\El(\mu,\alpha)$ and $\Eu(\mu,\alpha)$}
	\label{fig:local-dimension}
\end{figure}

\begin{remark}
{\rm
(i)\  We will compute the explicit formula of the Hausdorff dimension spectrum as 
\begin{equation*}
		\dim_{H}\El(\mu,\alpha)=
		\begin{cases}
		\beta_1^*(\alpha)& \alpha\in[\beta_{1}'(+\infty),\beta_{1}'(1))\cup [\beta_{1}'(0),\beta_{1}'(-\infty)],\\
		\min\{\alpha,\frac{\log 2}{\log A}\}& \,\alpha\in [\beta_{1}'(1),\beta_{1}'(0))
		\end{cases}
		\end{equation*}
		which turns out to be $\tilde{\beta}_1^*(\alpha)$.
		
\rm
(ii)\  We will also compute the explicit formula of the Hausdorff dimension spectrum as 
		\begin{equation*}
         \dim_{P}\overline{E}(\mu,\alpha)=
             \begin{cases}
              \alpha& \alpha\in[\beta_{2}'(+\infty),\beta_{2}'(1)),\\
              \beta_2^*(\alpha)& \,\alpha\in [\beta_{2}'(1),\beta_{2}'(0))\\
              \frac{\log 2}{\log B}& \,\alpha\in [\beta_{2}'(0),\beta_{2}'(-\infty)].
             \end{cases}
         \end{equation*}
         which turns out to be $\tilde{\beta}_2^*(\alpha)$.
}
\end{remark}	 

At last, we compute the dimension of $E(\mu,\alpha,\alpha')$.

	\begin{figure}[h]  
	\begin{minipage}[t]{0.5\linewidth}  
		\centering  
		\includegraphics[width=2.8in]{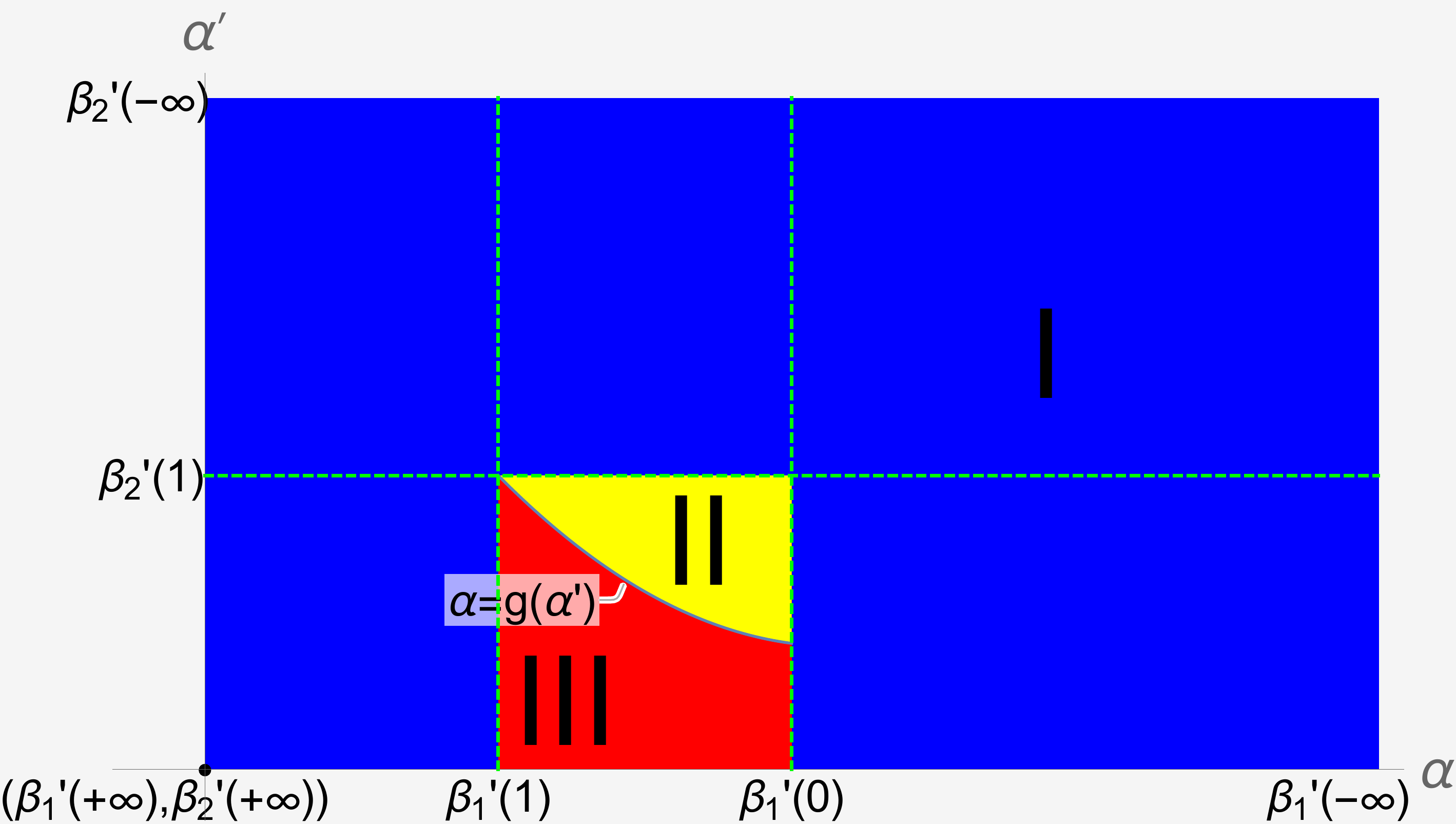}  
		\caption{Domain for Hausdorff dimension}  
		\label{fig:dom1}  
	\end{minipage}
	\begin{minipage}[t]{0.5\linewidth}  
		\centering  
		\includegraphics[width=2.8in]{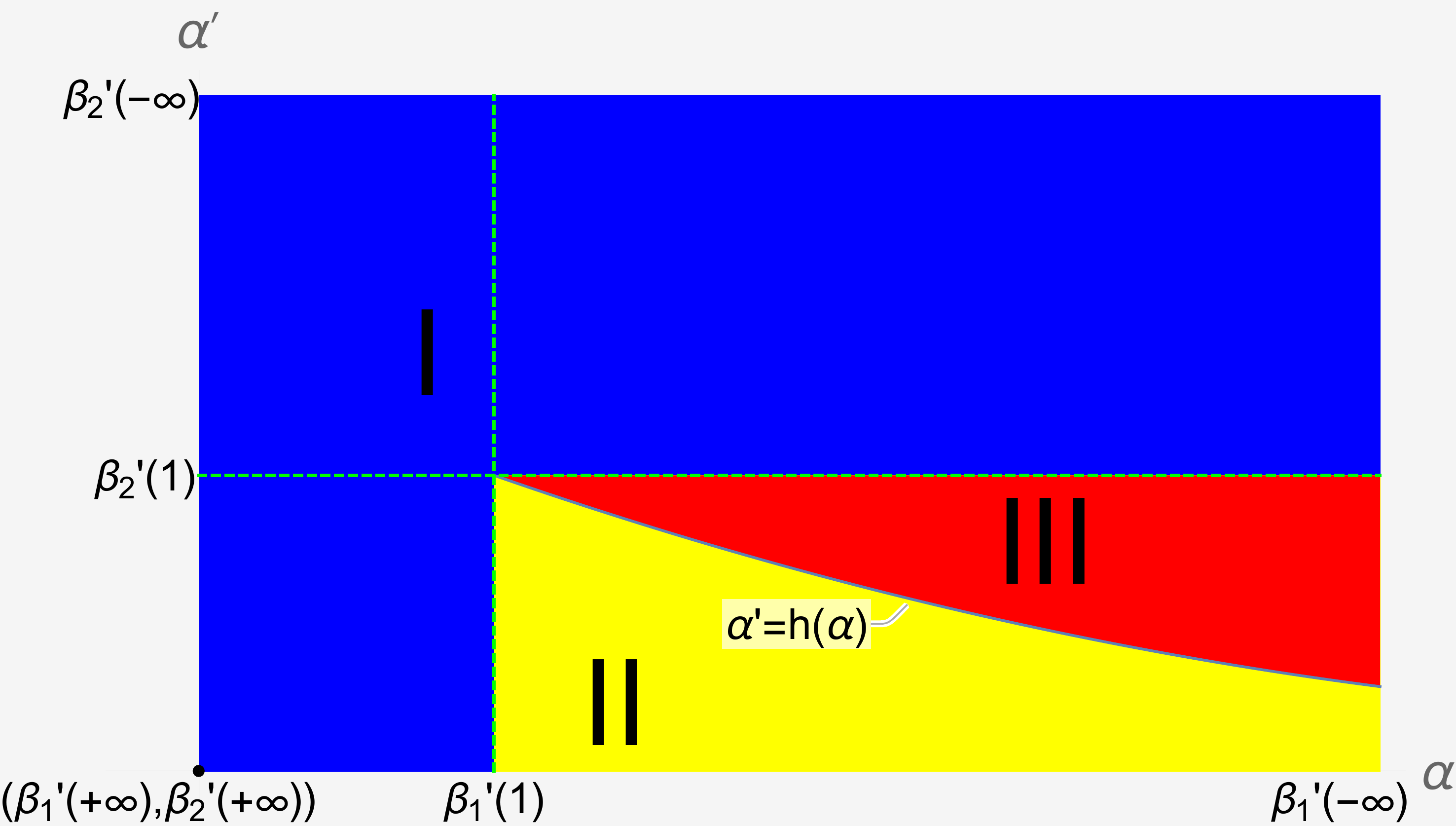}  
		\caption{Domain for Packing dimension}  
		\label{fig:dom2}  
	\end{minipage}  
\end{figure}

\begin{theorem}\label{main2}
Assume $(\alpha,\alpha')\in [\beta_{1}'(+\infty),\beta_{1}'(-\infty)]\times[\beta_{2}'(+\infty),\beta_{2}'(-\infty)]$. 
	\begin{enumerate}
		\item  we have
		\begin{itemize}
			\item if $(\alpha,\alpha')\notin [\beta'_1(1),\beta'_1(0)]\times[\beta'_2(+\infty),\beta'_2(1))$, i.e. $(\alpha,\alpha')$ in domain $I$ (the Blue part) of figure \ref{fig:dom1}\\	$$\dim_{H}(E(\mu,\alpha,\alpha'))=\min\{\dim_{H}\El(\mu,\alpha),\dim_{H}\Eu(\mu,\alpha')\}.$$
			\item if $\alpha'\in[\beta'_2(+\infty),\beta_{2}'(1))$ with $g(\alpha')\leq\beta_1'(0)$, where we take the tangent to the graph $\beta_1^*$ passing through the point $(\alpha',\beta_2^*(\alpha'))$ and denote the point of tangency by $(g(\alpha'),\beta_1^*(g(\alpha')))$. (One can refer figure \ref{fig:Thm-1} for the definition of the function $g$.)
			
			For $\alpha\in[g(\alpha'),\beta_1'(0))$, i.e. $(\alpha,\alpha')$ in domain $II$ (the yellow part) of figure \ref{fig:dom1}\\ 
			$$\dim_{H}(E(\mu,\alpha,\alpha'))=\min\{\frac{\log 2}{\log A},s_1\alpha-\beta_1(s_1)\},$$
			 where $s_1\in[0,1]$ satisfies $\beta_1'(s_1)=g(\alpha')$.
			\item otherwise,  i.e. $(\alpha,\alpha')$ in domain $III$ (the red part) of figure \ref{fig:dom1} 
			$$\dim_{H}(E(\mu,\alpha,\alpha'))=\min\{\beta_1^*(\alpha),\beta_2^*(\alpha')\}.$$
		\end{itemize}
		\item For $(\alpha,\alpha')\in [\beta_{1}'(+\infty),\beta_{1}'(-\infty)]\times[\beta_{2}'(+\infty),\beta_{2}'(-\infty)]$, we have
		\begin{itemize}
			\item if $(\alpha,\alpha')\notin [\beta'_1(1),\beta'_1(-\infty)]\times[\beta'_2(+\infty),\beta'_2(1))$, $(\alpha,\alpha')$ in domain $I$ (the blue part) of figure \ref{fig:dom2} \\
				$$\dim_{P}(E(\mu,\alpha,\alpha'))=\dim_{P} \Eu(\mu,\alpha').$$
			\item if $\alpha\in[\beta'_1(1),\beta'_1(-\infty)]$ and $\alpha'\in[\beta_2'(+\infty),h(\alpha))$. Here, we take the tangent to the graph $\beta_2^*$ passing through the point $(\alpha,\beta_1^*(\alpha))$ and denote the point of tangency by $(h(\alpha),\beta_2^*(h(\alpha)))$. (One can refer figure \ref{fig:Thm-2} for the definition of the function $h$.)
			Then take $s_2\in(1,+\infty]$ such that $\beta_2'(s_2)=h(\alpha)$.
			In this case, $(\alpha,\alpha')$ in domain $II$ (the Yellow part) of figure \ref{fig:dom2}
			$$\dim_{P}(E(\mu,\alpha,\alpha'))=s_2\alpha'-\beta_2(s_2).$$
			\item otherwise, $(\alpha,\alpha')$ in domain $III$ (the red part) of figure \ref{fig:dom2}
			$$\dim_{P}(E(\mu,\alpha,\alpha'))=\beta_2^*(\alpha').$$
		\end{itemize}
	\end{enumerate}
\end{theorem}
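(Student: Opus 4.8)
The plan is to reduce everything to a combinatorial problem about the digit statistics of $x$ inside the two families of ``blocks'' $\{N_{2i}+1,\dots,N_{2i+1}\}$ (the $A$-blocks) and $\{N_{2i+1}+1,\dots,N_{2i+2}\}$ (the $B$-blocks), using the basic properties recorded in Section~3. Since $B>2$ the strong separation property holds, so local dimensions may be computed on cylinders $I_{w|_n}$ rather than balls; and because $N_{i+1}/N_i\to\infty$, at any generation lying deep inside a block the quantities $|I_{w|_n}|$ and $\mu(I_{w|_n})$ are dominated by the contribution of the current block, all earlier blocks being of lower order. Combined with the inequality $-\log(1-q)/\log B>-\log p/\log A$ from \eqref{ass} — which makes every local ratio produced inside a $B$-block strictly exceed every local ratio produced inside an $A$-block — this shows that $\dl(\mu,x)$ is the $\liminf$ over $i$ of a constrained mixed entropy $H(\cdot,p)/\log A$ of the $0$-statistics of the $i$-th $A$-block, while $\du(\mu,x)$ is the corresponding $\limsup$ over the $B$-blocks with $(q,\log B)$ in place of $(p,\log A)$. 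Hence $E(\mu,\alpha,\alpha')=\El(\mu,\alpha)\cap\Eu(\mu,\alpha')$ translates into a prescribed-oscillation condition on these two sequences of within-block statistics, and both $\dim_H$ and $\dim_P$ of $E(\mu,\alpha,\alpha')$ become optimizations over admissible ``frequency profiles''.

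\emph{Upper bounds.} The trivial monotonicity bound $\dim_\bullet E(\mu,\alpha,\alpha')\le\min\{\dim_\bullet\El(\mu,\alpha),\dim_\bullet\Eu(\mu,\alpha')\}$, together with Theorem~\ref{main}, already gives the asserted value on domain~$I$ in both the Hausdorff and the packing cases. On domains~$II$ and~$III$ this is not sharp and I would build better covers. For $\dim_H$, cover $E(\mu,\alpha,\alpha')$ along a carefully chosen subsequence of generations $N_{2i+1}$ (right endpoints of $A$-blocks) by all cylinders that are \emph{simultaneously} compatible with $\dl(\mu,\cdot)=\alpha$ (a constraint on the $i$-th $A$-block) and with $\du(\mu,\cdot)=\alpha'$; the crucial extra input beyond the $\El$-only estimate is that $\du\le\alpha'+\varepsilon$ \emph{eventually} confines the statistics of every $B$-block to the region $\{H(\cdot,q)/\log B\le\alpha'+\varepsilon\}$, which caps the entropy carried by those factors of the cylinder. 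Estimating the number of such cylinders by Stirling, optimizing over the choice of subsequence, and letting $\varepsilon\to0$ produces the Legendre-type expressions: the variational conditions are precisely the tangency conditions defining $g$ and the parameter $s_1$, and the term $\log 2/\log A$ is the ambient bound $\dim_H E\le\dim_H X$. The packing upper bounds on domains~$II$ and~$III$ are obtained symmetrically, working along the generations $N_{2i+2}$ and exploiting that $\dl\ge\alpha-\varepsilon$ eventually confines every $A$-block's statistics, which is what forces the appearance of $h$, $s_2$ and of $\beta_2^*(\alpha')$ below the value $\tilde\beta_2^*(\alpha')=\dim_P\Eu(\mu,\alpha')$.

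\emph{Lower bounds.} For each of the three domains I would construct a homogeneous Moran subset $F\subseteq E(\mu,\alpha,\alpha')$ by prescribing, block by block, a target digit-frequency profile: on all but a sparse subsequence of $A$-blocks the frequency is held at the value maximizing entropy subject to keeping the running statistics on the correct side of $\alpha$, and on the sparse exceptional blocks it is tuned so that the block quantity equals $\alpha$ exactly; symmetrically for the $B$-blocks relative to $\alpha'$. Because $N_{i+1}/N_i\to\infty$, the exceptional blocks contribute nothing to the global counting and the resulting $\liminf$ (resp. $\limsup$) is exactly $\alpha$ (resp. $\alpha'$), so $F$ lies in the \emph{exact} double level set and not merely in one of the single-constraint level sets. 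In domains~$II$ the entropy-maximizing-subject-to-a-constraint value on the unexceptional $A$-blocks (resp. $B$-blocks) is not the one naively attached to $\alpha$ (resp. $\alpha'$) but the value $g(\alpha')$ with dual parameter $s_1$ (resp. $h(\alpha)$ with dual $s_2$), which is exactly where the tangency defining $g$ and $h$ enters. Equipping $F$ with the natural measure $\nu$ that is uniform over the prescribed patterns within each block and independent across blocks, I would verify the uniform local-dimension statements along $F$, then conclude $\dim_H F\ge(\text{the asserted value})$ by the mass distribution principle and $\dim_P F\ge(\text{the asserted value})$ by its packing counterpart, again using the ambient bounds $\log 2/\log A$ and $\log2/\log B$ where they are active.

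The main obstacle is the analysis of domains~$II$: one must show that the $\liminf$/$\limsup$ coupling genuinely \emph{forces} the statistics away from their naive values onto the tangency values $g(\alpha')$, $h(\alpha)$ — for the upper bound, that no cover can beat the exponent $\min\{\log2/\log A,\ s_1\alpha-\beta_1(s_1)\}$ (resp. $s_2\alpha'-\beta_2(s_2)$), and for the lower bound, that the constrained construction is admissible and attains it. A pervasive secondary difficulty is making all local-dimension computations \emph{uniform in $x\in F$}: one must arrange the digit patterns within each block (not merely their frequencies) so that the minimum resp. maximum of the local dimension over a block is attained where expected, absorbing all boundary and lower-order terms into the $o(1)$ afforded by the super-exponential growth of $\mathcal N$. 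Once these two points are in place, the remaining work is the Legendre bookkeeping that assembles the trichotomy of domains and the transitions between $\beta_1,\tilde\beta_1$, between $\beta_2,\tilde\beta_2$, and the clamps at $\dim_H X$, $\dim_P X$.
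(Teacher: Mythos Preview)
Your overall architecture is sound and the trichotomy you isolate is exactly the right one, but the route you propose for the \emph{upper bounds} on domains~II and~III is genuinely different from the paper's.

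You plan to build explicit covers at generations $N_{2i+1}$ (resp.\ $N_{2i+2}$), count the cylinders compatible with both constraints via Stirling, and optimise; the tangency conditions defining $g,h,s_1,s_2$ would then emerge as the solution of that variational problem. The paper does none of this. Instead it takes the \emph{same} kind of auxiliary Moran measure $\mu'$ used for the lower bound (with $p_n'=A^{\beta_1(s_1)}p^{s_1}$ on $A$-blocks and $p_n'=B^{\beta_2(s_2)}q^{s_2}$ on $B$-blocks) and proves the \emph{pointwise} inequality $\dl(\mu',x)\le s_1\alpha-\beta_1(s_1)$ for every $x\in E(\mu,\alpha,\alpha')$ (resp.\ $\du(\mu',x)\le s_2\alpha'-\beta_2(s_2)$ in the packing case). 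The tangency condition enters as the algebraic fact that the line $y=s_1x-\beta_1(s_1)$ lies above the value $s_2\alpha'-\beta_2(s_2)$ at $x=\alpha'$, which converts the eventual bound $d(\mu,x,N_{2i})\le\alpha'+\varepsilon$ coming from $\du=\alpha'$ into a bound on $d(\mu',x,N_{2i})$; combined with a later $n$ where $d(\mu,x,n)\le\alpha+\varepsilon$ (coming from $\dl=\alpha$), one gets $d(\mu',x,n)\le s_1\alpha-\beta_1(s_1)+o(1)$. Falconer's Proposition~2.3 then turns this pointwise local-dimension bound directly into the dimension upper bound, with no cover ever written down. This bypasses the issue you only half-acknowledge, namely that the scales at which $d(\mu,x,n)\le\alpha+\varepsilon$ are $x$-dependent, so your counting must be organised as a genuine $\limsup$-cover across many scales and then summed---doable, but more bookkeeping than the paper's argument.

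For the lower bounds your deterministic Moran subset $F$ with uniform-in-$x$ control is a valid alternative, but again the paper is lighter: it keeps the auxiliary measures $\mu'$ full-support on $X$, chooses the block parameters $p_n'$ (including intermediate breakpoints $N_i'$ in the delicate cases) so that the deterministic sequences $\sum x_k/\sum b_k$ and $\sum y_k/\sum b_k$ have the right $\liminf$/$\limsup$, and invokes the strong law of large numbers (Lemma~3.2) to pass to $\mu'$-a.e.\ statements. This eliminates your ``uniform in $x\in F$'' obstacle and the need to arrange digit patterns within blocks; only block-level frequencies matter.
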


We now draw figure~\ref{fig:Thm-1} (the red line) for $\dim_{H}(E(\mu,\alpha,\alpha'))$ when  $\alpha\in [\beta'_1(1),\beta'_1(0)]$ for a fixed $\alpha'\in[\beta'_2(+\infty),\beta_{2}'(1))$ with $g(\alpha')\leq\beta_1'(0)$ and  figure~\ref{fig:Thm-2} (the red line) for $\dim_{P}(E(\mu,\alpha,\alpha'))$ when  $\alpha'\in [\beta'_2(+\infty),\beta'_2(1)]$ with a fixed $\alpha\in(\beta_{2}'(1),\beta_{2}'(-\infty)]$ to illustrate theorem~\ref{main2}. 
	\begin{figure}[h]  
	\begin{minipage}[t]{0.5\linewidth}  
		\centering  
		\includegraphics[width=2.8in]{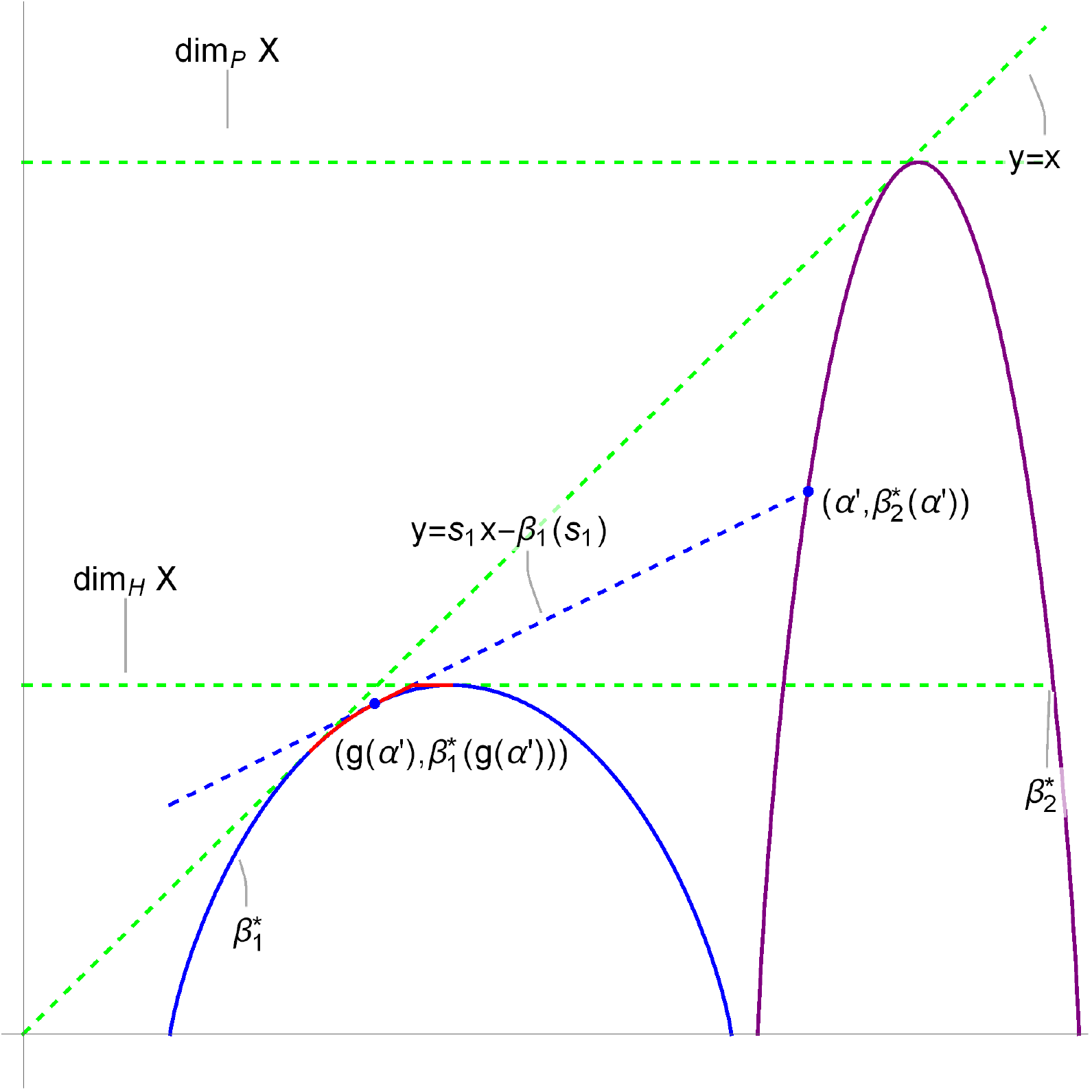}  
		\caption{$\dim_{H}(E(\mu,\cdot,\alpha'))$}  
		\label{fig:Thm-1}  
	\end{minipage}
	\begin{minipage}[t]{0.5\linewidth}  
		\centering  
		\includegraphics[width=2.8in]{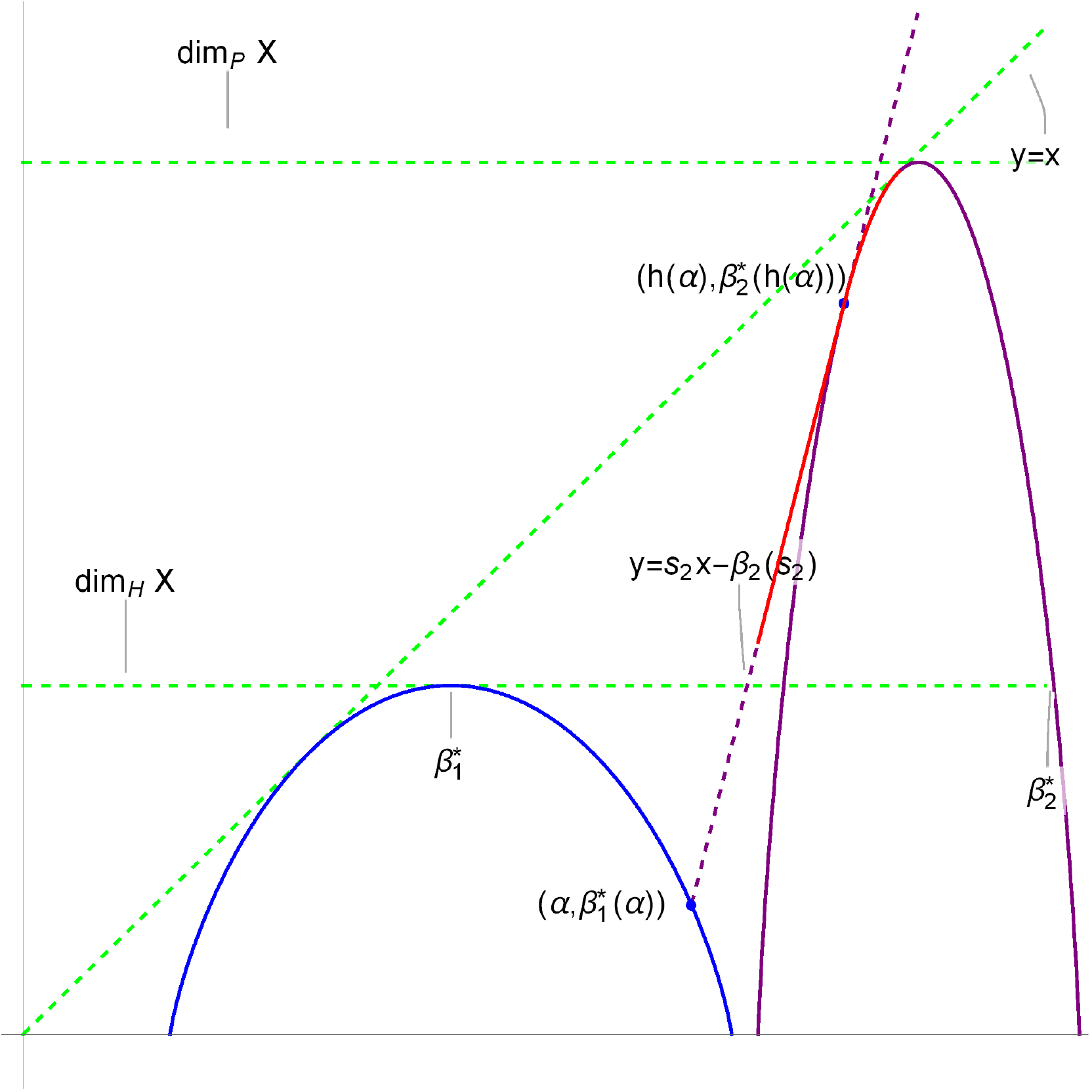}  
			\caption{$\dim_{P}(E(\mu,\alpha,\cdot))$}  
		\label{fig:Thm-2}  
	\end{minipage}  
\end{figure}

Let us give some remarks of this paper.
\begin{enumerate}

	\item We refer the reader to \cite{Yuan2017}, where the author deals with the random weak Gibbs measures. Under some natural conditions on transitivity and conformality, almost surely, it gives a full description of the multifractal analysis. But, unfortunately, it could not give any information for a sample in the exceptional set with measure $0$. A crucial problem is to deal with such samples. Is this still true? Is the level set with local dimension always nonempty? What can their multifractal behavior be? Our model provides a non-trivial sample with a full description of the multifractal analysis. 
    
	\item The inequality in~\eqref{ass} is not critical for the calculation of the dimension of level sets. Even if we do not have the inequality, our method can also be effective. In this sense, we can get the full results for the example in \cite{BP2013}. We focus this model to avoid discussing too many situations.
	
	\item We need to emphasize that we are dealing the level set with respect to upper and lower local dimensions under the nonexistence of the "free energy" function. The lower bounds for $\dim_H\El(\mu,\alpha)$ when $\alpha\in  [\beta_{1}'(1),\beta_{1}'(0))$ and  $\dim_P\Eu(\mu,\alpha)$ when $\alpha\in  [\beta_{2}'(+\infty),\beta_{1}'(1))$ are new phenomena and have not been obtained before. For the set $E(\mu,\alpha,\alpha')$, the upper bounds of the (Hausdorff and packing) dimensions are also new phenomena for the part if those of $\El(\mu,\alpha)$ and $\Eu(\mu,\alpha)$ could not provide the exact upper bounds.     
	  
	\item The strong law of large numbers \cite[Theorem 5.4.1]{Chung2001} plays an important role in this paper. The classical strong law of large numbers can deal with infinite sequence of independent identically distributed random variables, but in our situation they are not identically distributed.   
	       
%
\end{enumerate}

\section{A general model}
We now set a generalization of the model presented in section ~\ref{section: The Construction and main results}.
Fix a sequence of positive numbers $\{A_n\}_{n\in\mathbb{N}}$ with $A_n\geq2$ for any $n\in\mathbb{N}$.

Now we consider the following construction:
\begin{enumerate}
	\item [Step 1:] Let $I=[0,1]$,
	for $n=1$, define:
	$I_{0}=[0,1/A_1]\subset I,$ and $I_{1}=[1-1/A_{1},1]\subset I.$
	\item [Step 2:] For $n\in\mathbb{N}$ with $n>1$, we assume that for all $w\in \mathcal{A}^{n-1}$, the set $I_{w}$ have been defined. Let $x_w$ be the left endpoint of $I_{w}$. Define $I_{w\ast 0}=[x_w,x_w+\frac{|I_{w}|}{A_n}]$ and $I_{w\ast 1}=[x_w+|I_{w}|-\frac{|I_{w}|}{A_n},x_w+|I_{w}|]$.	
	\item [Step 3:] Define $Y=\cap_{n\in\mathbb{N}}\cup_{w\in\mathcal{A}^{n}}I_{w}$.
\end{enumerate}

Then, fix $a,b\in\mathbb{R}$ such that $0<a\leq b<1$ and a sequence of positive numbers $\{p_n\}_{n\in\mathbb{N}}$ with $a\leq p_n\leq b$ for any $n\in\mathbb{N}$.

Now we distribute a measure on $Y$:
\begin{enumerate}
	\item [Step 1:] $\nu(I)=1$.
	For $n=1$, define: $\nu(I_0)=p_1$ and $\nu(I_1)=1-p_1$.
	\item [Step 2:] For $n\in\mathbb{N}$ with $n>1$, we assume that $\nu(I_{\sigma})$ have been defined for all $\sigma\in \mathcal{A}^{n-1}$.
	Define $\nu(I_{\sigma\ast0})=p_{n}\nu(I_{\sigma})$ and $\nu(I_{\sigma\ast 1})=(1-p_n)\nu(I_{\sigma})$.
	\item [Step 4:] We can extend $\nu$ to be a probability measure on $Y$ by measure extension theorem, then ${\rm supp}(\nu)=Y$.
\end{enumerate}

Define $d(\nu,x,n)=\frac{\log \nu(I_{n}(x))}{\log |I_{n}(x)|}$, where $I_{n}(x)$ is the cylinder $I_{w}$ with $x\in I_{w}$ and $w=w_1w_2\cdots w_{n}$ of length $n$.

Now we choose a sequence $\{p_n'\}_{n\in\mathbb{N}}$ with $0\leq p_n'\leq 1$. We consider the measure $\nu'$ constructed as $\nu$ with parameters $\{p_n'\}_{n\in\mathbb{N}}$ instead of $\{p_n\}_{n\in\mathbb{N}}$.

Now we state a version of strong law of large numbers which will be useful in this paper.
\begin{theorem}\label{strong law of large numbers}\cite[Theorem 5.4.1]{Chung2001}
	Let $\{X_n\}_{n\in\mathbb{N}}$ be a sequence of independent random variables with $\mathbb{E}(X_n)=0$ for every $n\in\mathbb{N}$, and $\{a_n\}_{n\in\mathbb{N}}$ positive and increasing to $+\infty$. If there exists a function $\phi$ such that $\phi$ is positive, even and continuous on $\mathbb{R}$ such that 
	\begin{itemize}
		\item when $|x|$ increases, $\frac{\phi(x)}{|x|}$ does not decrease and $\frac{\phi(x)}{x^2}$ does not increase,
		\item $\sum_{n}\frac{\mathbb{E}(\phi(X_n))}{\phi(a_n)}<+\infty$.
	\end{itemize}
	Then $\sum_{n}\frac{X_n}{a_n}$ converge almost everywhere.
	Furthermore, $\frac{1}{a_n}\sum_{i=1}^nX_i$ converges to $0$ as $n\to\infty$ almost everywhere. 
\end{theorem}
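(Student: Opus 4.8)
The plan is to reproduce the classical Kolmogorov truncation argument; since this statement is quoted verbatim as Theorem~5.4.1 of \cite{Chung2001} it may simply be cited, but the scheme is as follows. First I would truncate, setting $Y_{n}=X_{n}\,\mathbf{1}_{\{|X_{n}|\le a_{n}\}}$. The two monotonicity hypotheses on $\phi$ translate into the pointwise inequalities
\[
\mathbf{1}_{\{|x|> a\}}\le \frac{\phi(x)}{\phi(a)}\qquad\text{and}\qquad x^{2}\,\mathbf{1}_{\{|x|\le a\}}\le a^{2}\,\frac{\phi(x)}{\phi(a)},
\]
valid for every $a>0$: on $|x|>a$ the fact that $\phi(x)/|x|$ is non-decreasing in $|x|$ gives $\phi(x)\ge |x|\phi(a)/a\ge \phi(a)$, and on $|x|\le a$ the fact that $\phi(x)/x^{2}$ is non-increasing in $|x|$ gives $x^{2}\le a^{2}\phi(x)/\phi(a)$. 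Applying the first inequality with $a=a_{n}$, $x=X_{n}$ and taking expectations, $\sum_{n}\mathbb{P}(|X_{n}|>a_{n})\le\sum_{n}\mathbb{E}[\phi(X_{n})]/\phi(a_{n})<\infty$, so the Borel--Cantelli lemma shows that almost surely $X_{n}=Y_{n}$ for all large $n$. Consequently $\sum_{n}(X_{n}-Y_{n})/a_{n}$ has only finitely many nonzero terms a.s., and $a_{n}^{-1}\sum_{i\le n}(X_{i}-Y_{i})\to 0$ a.s. because the numerator is eventually constant while $a_{n}\to\infty$; hence it suffices to prove both conclusions for the truncated sequence $\{Y_{n}\}$.

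Next I would estimate second moments and centering corrections. From the second pointwise inequality, $\mathbb{E}[Y_{n}^{2}]\le a_{n}^{2}\,\mathbb{E}[\phi(X_{n})]/\phi(a_{n})$, whence
\[
\sum_{n}\frac{\mathrm{Var}(Y_{n})}{a_{n}^{2}}\le\sum_{n}\frac{\mathbb{E}[Y_{n}^{2}]}{a_{n}^{2}}\le\sum_{n}\frac{\mathbb{E}[\phi(X_{n})]}{\phi(a_{n})}<\infty .
\]
Since $\mathbb{E}[X_{n}]=0$ we have $\mathbb{E}[Y_{n}]=-\mathbb{E}[X_{n}\mathbf{1}_{\{|X_{n}|>a_{n}\}}]$, and on $\{|X_{n}|>a_{n}\}$ the first monotonicity also gives $|X_{n}|\le a_{n}\phi(X_{n})/\phi(a_{n})$, so $\sum_{n}|\mathbb{E}[Y_{n}]|/a_{n}\le\sum_{n}\mathbb{E}[\phi(X_{n})]/\phi(a_{n})<\infty$; in particular $\sum_{n}\mathbb{E}[Y_{n}]/a_{n}$ converges absolutely and, by Kronecker's lemma (using $a_{n}\uparrow\infty$), $a_{n}^{-1}\sum_{i\le n}\mathbb{E}[Y_{i}]\to 0$.

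Finally I would conclude with Kolmogorov's one-series convergence theorem. The variables $Z_{n}:=(Y_{n}-\mathbb{E}[Y_{n}])/a_{n}$ are independent, centered, and satisfy $\sum_{n}\mathrm{Var}(Z_{n})=\sum_{n}\mathrm{Var}(Y_{n})/a_{n}^{2}<\infty$, so $\sum_{n}Z_{n}$ converges a.s.; adding the absolutely convergent series $\sum_{n}\mathbb{E}[Y_{n}]/a_{n}$ yields convergence of $\sum_{n}Y_{n}/a_{n}$, and combining with the truncation step, of $\sum_{n}X_{n}/a_{n}$, a.s. For the averaged statement, Kronecker's lemma applied to $\sum_{n}Z_{n}$ gives $a_{n}^{-1}\sum_{i\le n}(Y_{i}-\mathbb{E}[Y_{i}])\to 0$ a.s.; adding $a_{n}^{-1}\sum_{i\le n}\mathbb{E}[Y_{i}]\to 0$ from the previous step, and using $X_{n}=Y_{n}$ eventually, yields $a_{n}^{-1}\sum_{i\le n}X_{i}\to 0$ a.s. The only mildly delicate point of the whole argument is the derivation of the two pointwise inequalities from the stated monotonicity of $\phi$; once those are in hand, everything else is the textbook truncation scheme and presents no further obstacle.
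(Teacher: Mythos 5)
The paper does not prove this statement at all: it is quoted verbatim as Theorem~5.4.1 of Chung's textbook \cite{Chung2001} and used as a black box. Your reconstruction via Kolmogorov's truncation scheme is correct and is, in fact, the standard textbook proof of that result (truncate at $a_n$, use the two monotonicity properties of $\phi$ to control the tail probabilities, the truncated second moments, and the centering correction, then invoke the one-series theorem and Kronecker's lemma), so there is nothing to compare beyond noting that you have supplied a proof where the paper supplies only a citation.
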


From theorem \ref{strong law of large numbers}, we can get the following lemma.
\begin{lemma}\label{an limit}
	\begin{itemize}
		\item [1)] For $\nu'$-almost every $x\in Y={\rm supp}(\nu)$, one has $$\liminf_{n\to\infty} d(\nu,x,n)=\liminf_{n\to \infty}\frac{\sum_{k=1}^{n}x_k}{\sum_{k=1}^{n}b_k}$$
		and
		$$\limsup_{n\to\infty} d(\nu,x,n)=\limsup_{n\to \infty}\frac{\sum_{k=1}^{n}x_k}{\sum_{k=1}^{n}b_k}.$$
		where $x_k=-p_k'\log p_k-(1-p_k')\log (1-p_k)$.
		\item [2)] For $\nu'$-almost every $x\in {\rm supp}(\nu')\subset Y$, one has $$\liminf_{n\to\infty} d(\nu',x,n)=\liminf_{n\to \infty}\frac{\sum_{k=1}^{n}y_k}{\sum_{k=1}^{n}b_k}$$
		and
		$$\limsup_{n\to\infty} d(\nu',x,n)=\limsup_{n\to \infty}\frac{\sum_{k=1}^{n}y_k}{\sum_{k=1}^{n}b_k}.$$
		where $y_k=-p_k'\log p_k'-(1-p_k')\log (1-p_k')$.
	\end{itemize}
\end{lemma}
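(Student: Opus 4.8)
The plan is to reduce both assertions to the law of large numbers in the form of Theorem \ref{strong law of large numbers}, applied to the digit sequence of $x$ distributed according to $\nu'$. First I would record the exact product formulas coming from the construction: for $x\in Y$ with symbolic expansion $w_1w_2\cdots$, the level-$n$ cylinder satisfies $|I_n(x)|=\prod_{k=1}^{n}A_k^{-1}$, so $-\log|I_n(x)|=\sum_{k=1}^{n}b_k$ with $b_k=\log A_k$, while $\nu(I_n(x))=\prod_{k=1}^{n}p_k^{\,1-w_k}(1-p_k)^{w_k}$, so $-\log\nu(I_n(x))=\sum_{k=1}^{n}\xi_k(x)$ where $\xi_k(x):=-\mathbf{1}_{\{w_k=0\}}\log p_k-\mathbf{1}_{\{w_k=1\}}\log(1-p_k)\ge0$. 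Hence $d(\nu,x,n)=\big(\sum_{k=1}^{n}\xi_k(x)\big)/\big(\sum_{k=1}^{n}b_k\big)$, and likewise $d(\nu',x,n)=\big(\sum_{k=1}^{n}\eta_k(x)\big)/\big(\sum_{k=1}^{n}b_k\big)$ with $\eta_k(x):=-\mathbf{1}_{\{w_k=0\}}\log p_k'-\mathbf{1}_{\{w_k=1\}}\log(1-p_k')$.

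Next I would use that under $\nu'$ the digits $(w_k)_{k}$ are independent with $\nu'(w_k=0)=p_k'$. Thus $(\xi_k)_k$ are independent with $\mathbb{E}_{\nu'}[\xi_k]=-p_k'\log p_k-(1-p_k')\log(1-p_k)=x_k$, and $(\eta_k)_k$ are independent with $\mathbb{E}_{\nu'}[\eta_k]=y_k$. Put $X_k:=\xi_k-x_k$; these are independent, centered under $\nu'$, and uniformly bounded, since $a\le p_k\le b<1$ forces $-\log p_k$ and $-\log(1-p_k)$ into a fixed compact interval, so $|X_k|\le M=M(a,b)$ and $\mathbb{E}_{\nu'}[X_k^2]\le M^2$. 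Now apply Theorem \ref{strong law of large numbers} with $a_n:=\sum_{k=1}^{n}b_k$ and $\phi(t):=t^2$: the sequence $(a_n)$ is positive, strictly increasing and $a_n\ge n\log 2\to\infty$ because $A_k\ge2$; $\phi$ is positive, even, continuous with $\phi(t)/|t|=|t|$ non-decreasing and $\phi(t)/t^2\equiv1$ non-increasing; and $\sum_n \mathbb{E}_{\nu'}[\phi(X_n)]/\phi(a_n)\le M^2\sum_n a_n^{-2}\le \tfrac{M^2}{(\log 2)^2}\sum_n n^{-2}<\infty$. The theorem then gives $a_n^{-1}\sum_{k=1}^{n}X_k\to0$ for $\nu'$-a.e. $x$, that is, $d(\nu,x,n)-\big(\sum_{k=1}^{n}x_k\big)/\big(\sum_{k=1}^{n}b_k\big)\to0$; passing to $\liminf$ and $\limsup$ yields part 1).

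For part 2) the argument is identical with $\xi_k,x_k$ replaced by $\eta_k,y_k$, except that one must handle $p_k'\in\{0,1\}$: in that case $\eta_k=0$ $\nu'$-a.s. and $y_k=0$, so $\eta_k-y_k\equiv0$, while for $0<p_k'<1$ one has $\mathbb{E}_{\nu'}[(\eta_k-y_k)^2]=p_k'(1-p_k')\big(\log\tfrac{p_k'}{1-p_k'}\big)^2$, and since $t\mapsto t(1-t)\big(\log\tfrac{t}{1-t}\big)^2$ extends continuously to $[0,1]$ with value $0$ at both endpoints it is bounded by a universal constant. Hence $\mathbb{E}_{\nu'}[(\eta_k-y_k)^2]$ is uniformly bounded, Theorem \ref{strong law of large numbers} applies verbatim with the same $a_n$ and $\phi$, and $a_n^{-1}\sum_{k=1}^{n}(\eta_k-y_k)\to0$ for $\nu'$-a.e. $x\in\mathrm{supp}(\nu')\subset Y$, giving part 2) after taking $\liminf$ and $\limsup$.

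The only genuine difficulty is exactly the one flagged in the remarks of Section 2: the summands are independent but \emph{not} identically distributed, so the classical strong law is unavailable and one must invoke the general version, which requires verifying the uniform second-moment bounds and the convergence $\sum_n a_n^{-2}<\infty$ (this is where $A_k\ge2$ enters). For part 1) this bound is immediate from $p_k\in[a,b]$; for part 2) it is the continuity-at-the-endpoints observation above. Everything else is bookkeeping with the product formulas for cylinders.
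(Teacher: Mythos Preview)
Your proof is correct and follows essentially the same approach as the paper's: both reduce to Theorem~\ref{strong law of large numbers} by centering the digit random variables and taking $a_n=\sum_{k=1}^n\log A_k$ with $\phi(t)=t^2$. You supply more detail than the paper does---in particular the explicit verification that $\sum_n a_n^{-2}<\infty$ via $A_k\ge 2$, and the careful treatment of the endpoint cases $p_k'\in\{0,1\}$ in part~2) (which the paper simply omits by saying ``the next one is the same'')---but the strategy is identical.
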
  

\begin{proof}
We just fix our attention on item 1), the next one is the same.

Define the random variable $X_{k}$ as 
$$X_k(x)=\begin{cases}
-\log p_k&\,I_{k}(x)=I_{w} \text{ with } w_k=0,\\
-\log(1-p_{k})&\,I_{k}(x)=I_{w} \text{ with } w_k=1.
\end{cases}$$
Then, for the measure $\nu'$, $\{X_k-\mathbb{E}(X_k)\}_{n\in\mathbb{N}}$ is a sequence of independent random variables with $\mathbb{E}(X_k-\mathbb{E}(X_k))=0$ 

Define $b_k=\log A_k$, we replace the $X_n$, $a_n$ and $\phi$ in theorem~\ref{strong law of large numbers} by $X_n-\mathbb{E}(X_n)$, $\sum_{k=1}^n b_k$ and the function $s\mapsto s^2$, we can conclude for $\nu'$-almost every $x\in Y$ 
$$\lim_{n\to \infty}\frac{\sum_{k=1}^{n}(X_k(x)-\mathbb{E}(X_k))}{\sum_{k=1}^{n}b_k}=0.$$
So that
$$\liminf_{n\to \infty}\frac{\sum_{k=1}^{n}X_k(x)}{\sum_{k=1}^{n}b_k}=\liminf_{n\to \infty}\frac{\sum_{k=1}^{n}\mathbb{E}(X_k)}{\sum_{k=1}^{n}b_k}\ \ \text{ and }\ \ \limsup_{n\to \infty}\frac{\sum_{k=1}^{n}X_k(x)}{\sum_{k=1}^{n}b_k}=\limsup_{n\to \infty}\frac{\sum_{k=1}^{n}\mathbb{E}(X_k)}{\sum_{k=1}^{n}b_k}.$$
Item 1) valid if we notice that $d(\nu,x,n)=\frac{\sum_{k=1}^{n}X_k(x)}{\sum_{k=1}^{n}b_k}$ and $\mathbb{E}(X_k)=x_k$.
\end{proof}

Now if we assume that $2<\underline{A}:=\inf\{A_n:n\in\mathbb{N}\}$, then we know that the strong separation condition holds. We now also assume  $\overline{A}:=\sup\{A_n:n\in\mathbb{N}\}<+\infty$. We can get the following lemma by the same method as in \cite[Corollary 4.3]{Falconer1997}.
\begin{lemma}\label{<B<}
   For any ball $B(x,r)$ with $x\in Y$ and $r>0$, we choose the smallest $n\in\mathbb{N}$ and the largest $n'\in\mathbb{N}$ such that $I_{n}(x)\subset B(x,r)$ and $(B(x,r)\cap Y)\subset I_{n'}(x)$. Then we have
   
   $\frac{r}{\overline{A}}\leq \frac{r}{A_n}\leq |I_{n}(x)|\leq 2r$ and $|I_{n'}(x)|\leq \frac{2(A_{n'+1})r}{A_{n'+1}-2}\leq \frac{2\underline{A}r}{\underline{A}-2}$ 
\end{lemma}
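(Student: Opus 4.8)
The plan is to compare the ball $B(x,r)$ with the cylinders of the Moran construction by purely elementary geometry, exploiting the strong separation that the hypothesis $\underline A>2$ guarantees. The key observation is that consecutive cylinders $I_w$ and $I_{w'}$ of the same generation are separated by a gap that is comparable to the length of the parent cylinder, so a ball centered on $Y$ either is small enough to sit inside a single cylinder or is large enough to contain the next smaller cylinder around its center.

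First I would fix $x\in Y$ and $r>0$, and let $n$ be the smallest integer with $I_n(x)\subset B(x,r)$. Since $I_n(x)\subset B(x,r)$ but $I_{n-1}(x)\not\subset B(x,r)$, and since $x\in I_n(x)\subset I_{n-1}(x)$, the ball of radius $r$ about $x$ fails to cover all of $I_{n-1}(x)$; because $x$ lies in the sub-cylinder $I_n(x)$ of $I_{n-1}(x)$, which is one of the two extreme sub-cylinders, the distance from $x$ to the far end of $I_{n-1}(x)$ is at least $|I_{n-1}(x)|-|I_n(x)|=|I_{n-1}(x)|(1-1/A_n)\ge |I_{n-1}(x)|/2$, hence $r\le |I_{n-1}(x)|$; equivalently $|I_n(x)|=|I_{n-1}(x)|/A_n\ge r/A_n\ge r/\overline A$. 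The upper bound $|I_n(x)|\le 2r$ is even easier: $I_n(x)\subset B(x,r)$ forces $|I_n(x)|\le\mathrm{diam}\,B(x,r)=2r$. This settles the first chain of inequalities.

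Next I would treat $n'$, the largest integer with $(B(x,r)\cap Y)\subset I_{n'}(x)$. By maximality, $B(x,r)\cap Y$ is not contained in $I_{n'+1}(x)$, so $B(x,r)$ meets $Y\setminus I_{n'+1}(x)$; since $Y\setminus I_{n'+1}(x)$ inside $I_{n'}(x)$ lies in the sibling sub-cylinder, and the two generation-$(n'+1)$ sub-cylinders of $I_{n'}(x)$ are separated by a gap of length $|I_{n'}(x)|(1-2/A_{n'+1})$, the ball must reach across part of that gap. Combining the fact that $x\in I_{n'+1}(x)$ with the existence of a point of $B(x,r)\cap Y$ outside $I_{n'+1}(x)$, one gets $r\ge$ (half of) the gap width $\ge |I_{n'}(x)|(A_{n'+1}-2)/(2A_{n'+1})$, whence $|I_{n'}(x)|\le 2A_{n'+1}r/(A_{n'+1}-2)\le 2\underline A r/(\underline A-2)$, the last step using that $t\mapsto t/(t-2)$ is decreasing on $(2,\infty)$ together with $A_{n'+1}\ge\underline A$. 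The existence of such an $n'$ (finiteness) is guaranteed because $|I_m(x)|\to 0$ forces $I_m(x)\subset B(x,r)$ for large $m$ only through the intersection with $Y$, while $I_0(x)=[0,1]\supset B(x,r)\cap Y$ trivially, so the set of admissible indices is nonempty and bounded; and $n$ exists because $|I_m(x)|\to 0$.

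The only mild obstacle is bookkeeping the geometry of which sub-cylinder $x$ sits in when bounding distances to the complement — it is essential that in this construction the two children always occupy the \emph{extreme} ends of the parent, so that $x$ lying in one child is uniformly far (a definite fraction of the parent length) from the other child and from the far endpoint; the hypotheses $2<\underline A\le\overline A<\infty$ make all these fractions uniformly bounded away from $0$ and $1$. None of this requires more than the triangle inequality and the elementary length ratios $|I_{w\ast i}|/|I_w|\in\{1/A_n\}$, so I would present it as a short direct verification modeled on \cite[Corollary 4.3]{Falconer1997}, as the statement already indicates.
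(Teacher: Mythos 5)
Your proposal is correct and follows essentially the same route as the paper's own (very terse) proof: you deduce $r\le |I_{n-1}(x)|$ from the minimality of $n$ and the fact that $x\in I_{n-1}(x)$, and you deduce the bound on $|I_{n'}(x)|$ from the fact that the ball must reach across the $(n'+1)$-th gap of $I_{n'}(x)$, exactly the two observations the paper cites. The only cosmetic remark is that your detour through "distance from $x$ to the far end is $\ge |I_{n-1}(x)|/2$" is unnecessary for concluding $r\le|I_{n-1}(x)|$ (the trivial bound $\mathrm{diam}\,I_{n-1}(x)=|I_{n-1}(x)|$ suffices), and the parenthetical "half of" in the gap estimate is a harmless safety margin -- in fact $r$ exceeds the full gap width, yielding a bound without the factor $2$; your weaker version still matches what the lemma asserts.
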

The proof of lemma~\ref{<B<} is obvious if we notice that $|I_{n-1}(x)|\geq r$ and the $(n'+1)$-th gap in $I_{n'}(x)$ is a subset of $B(x,r)$.

Lemma~\ref{an limit} and \ref{<B<} can deduce the following lemma.
\begin{lemma}\label{dimloc an}
	If we assume that $2<\underline{A}\leq \overline{A}<+\infty$,
	\begin{enumerate}
		\item[1)] for each point $x\in Y={\rm supp}(\nu)$,
		$$\underline{\dim}_{\rm loc}(\nu',x)=\liminf_{n\to\infty} {d(\nu',x,n)}\ \ \text{and} \ \ \overline{\dim}_{\rm loc}( \nu',x)=\limsup_{n\to\infty}  d(\nu',x,n).$$
		\item [2)] the measure $\nu'$ is exact upper and lower dimensional. 
		
	\end{enumerate}

\end{lemma}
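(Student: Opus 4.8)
The plan is to deduce both items as essentially formal consequences of Lemma~\ref{<B<} together with Lemma~\ref{an limit}. For item 1), fix $x\in Y$ and a radius $r>0$, and let $n=n(x,r)$ and $n'=n'(x,r)$ be the indices furnished by Lemma~\ref{<B<}, so that $I_{n'}(x)\supset B(x,r)\cap Y\supset I_n(x)$. Since $\nu'$ is monotone we have $\nu'(I_{n'}(x))\ge \nu'(B(x,r))\ge \nu'(I_n(x))$, and since all logarithms of radii are negative, taking $\log$ and dividing by $\log r$ reverses inequalities in a controlled way. The key point is that the comparisons $|I_n(x)|\le 2r$, $|I_n(x)|\ge r/\overline A$, and $|I_{n'}(x)|\le \frac{2\underline A}{\underline A-2}\,r$ from Lemma~\ref{<B<} show that $\log|I_n(x)|=\log r+O(1)$ and $\log|I_{n'}(x)|=\log r+O(1)$, with the $O(1)$ uniform in $x$ and $r$. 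Hence
$$
\frac{\log\nu'(B(x,r))}{\log r}=\frac{\log\nu'(I_n(x))}{\log|I_n(x)|}\cdot\frac{\log|I_n(x)|}{\log r}+o(1)=d(\nu',x,n)\,(1+o(1))+o(1)
$$
as $r\to0$ (and similarly with $n'$ as the upper comparison), because $|\log|I_n(x)||\to\infty$. Combined with the fact that as $r$ runs to $0$ the integers $n(x,r)$ and $n'(x,r)$ run through all sufficiently large values and differ by a bounded amount is \emph{not} needed here — what is needed is only that both $n(x,r)\to\infty$ and $n'(x,r)\to\infty$ and that consecutive cylinder levels give comparable radii, so that $\liminf$ and $\limsup$ over $r$ of $\frac{\log\nu'(B(x,r))}{\log r}$ coincide with the corresponding $\liminf$ and $\limsup$ over $n$ of $d(\nu',x,n)$. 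This gives item 1) for every $x\in Y$.

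For item 2), apply item 1) pointwise and then invoke Lemma~\ref{an limit}, part 2): for $\nu'$-almost every $x\in\supp(\nu')$ one has $\liminf_n d(\nu',x,n)=\liminf_n \frac{\sum_{k\le n} y_k}{\sum_{k\le n} b_k}$ and likewise for $\limsup$, where $y_k=H(p_k',p_k')$ and $b_k=\log A_k$ are \emph{deterministic} sequences not depending on $x$. Therefore the right-hand sides are constants, call them $C_2$ and $C_1$ respectively, and by item 1) we get $\dl(\nu',x)=C_2$ and $\du(\nu',x)=C_1$ for $\nu'$-a.e.\ $x$, which is exactly the assertion that $\nu'$ is exact lower and upper dimensional. (The boundedness hypotheses $2<\underline A\le\overline A<\infty$ and $0<a\le p_n\le b<1$ guarantee these limits are finite, though only finiteness, not the exact values, is claimed.)

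The routine but slightly delicate step — the one I would write out carefully — is the passage in item 1) from balls to cylinders: one must check that the error terms are genuinely $o(1)$ after dividing by $\log r$, using that $\log\nu'(I_n(x))$ and $\log|I_n(x)|$ both tend to $-\infty$ at comparable linear rates (bounded below because $p_k\ge a>0$ and $A_k\le\overline A<\infty$, so $\nu'(I_n(x))\ge a^n$ is not too small relative to $|I_n(x)|$), so that multiplying a ratio that is $O(1)$ by a factor $1+o(1)$ and adding an $O(1/|\log r|)$ term does not change the $\liminf$/$\limsup$. I expect this to be the main obstacle only in the bookkeeping sense; conceptually it is the standard ``balls versus cylinders'' argument enabled by the strong separation condition (Remark, item (ii)), and once Lemma~\ref{<B<} is in hand there is no real difficulty. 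The monotonicity sandwich $\nu'(I_{n'}(x))\ge\nu'(B(x,r))\ge\nu'(I_n(x))$ with $n,n'$ both comparable to $\log r/\log(\text{something bounded})$ does all the work.
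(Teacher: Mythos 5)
Your proof takes exactly the route the paper intends: the paper offers no explicit proof of Lemma~\ref{dimloc an}, stating only that it follows from Lemmas~\ref{an limit} and~\ref{<B<}, and your ball-to-cylinder sandwich for item~1) followed by the a.e.\ constancy from Lemma~\ref{an limit}~2) for item~2) is the natural way to fill that in. One correction to the justification in your last paragraph: the claim that $\nu'(I_n(x))\ge a^n$ because $p_k\ge a>0$ conflates the two measures --- $\nu'$ is built from $\{p'_n\}$, which the general model only constrains to lie in $[0,1]$, so no such lower bound is available (indeed $\nu'(I_n(x))$ can decay arbitrarily fast). Fortunately that bound is not needed, and the spurious additive ``$+o(1)$'' in your display is also unnecessary: the sandwich $\nu'(I_n(x))\le\nu'(B(x,r))\le\nu'(I_{n'}(x))$ divided by $\log r<0$ gives a two-sided inequality, and the exact identity $\frac{\log\nu'(I_n(x))}{\log r}=d(\nu',x,n)\cdot\frac{\log|I_n(x)|}{\log r}$ shows $\frac{\log\nu'(B(x,r))}{\log r}$ is pinched between $d(\nu',x,n'(x,r))$ and $d(\nu',x,n(x,r))$, each multiplied by a factor tending to $1$; since $d(\nu',x,n)\ge 0$, this multiplicative correction preserves $\liminf$ and $\limsup$ whether or not $d$ is bounded, so no control on $p'_n$ beyond $[0,1]$ is required.
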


\section{Proof of the results} 

Now we return to the model defined in section~\ref{section: The Construction and main results}. 

\subsection{Proof of Theorem~\ref{dim-loc-bound}}
In fact we just need to notice 
 $\lim_{i\to\infty}\frac{N_{i+1}}{N_i}=\infty$, lemma~\ref{an limit} and~\ref{dimloc an}. The proof is the following.

\begin{proof}
	\begin{enumerate}
	\item We just need to notice that $$-\frac{\log (1-p)}{\log A}\leq d(\nu,x,n)\leq  -\frac{\log q}{\log B},$$
	$$\liminf_{i\to\infty}d(\nu,x,N_{2i+1})\leq-\frac{\log p}{\log A}$$
	and
	$$\limsup_{i\to\infty}d(\nu,x,N_{2i})\geq -\frac{\log (1-q)}{\log B}.$$
	The results can be obtained from lemma~\ref{dimloc an}.
	\item It is from inequality \eqref{ass} and the previous item.
	\item It is direct due to the construction and lemma~\ref{an limit} and~\ref{dimloc an}.
	\item  We just need to distribute a mass replacing $p$ and $q$ by $1/2$, it is obvious that for all $x\in X$, $\underline{\dim}_{\rm loc}(\mu,x)=\frac{\log 2}{\log A}$ and $\overline{\dim}_{\rm loc}(\mu,x)=\frac{\log 2}{\log B}$. By \cite[Proposition 2.3]{Falconer1997}, the results hold.  
\end{enumerate}	
\end{proof}

\subsection{Proof of Theorem~\ref{main}}

    From the construction, since $A>B>2$ and the lengths of  cylinders $I_{w}, w\in \mathcal{A}^n$ are equal for any $n\in\mathbb{N}$, we can easily obtain the following lemma from lemma~\ref{<B<}.
    \begin{lemma}
    	\begin{equation}\label{tau-l}
    	\underline\tau(s)=\min\{\beta_1(s),\beta_2(s)\}
    	\end{equation}
    	and
    	\begin{equation}\label{tau-u}
    	\overline{\tau}(s)=\max\{\beta_1(s),\beta_2(s)\}.
    	\end{equation}	   
    \end{lemma}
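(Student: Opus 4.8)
The plan is to reduce the ball sums defining $\underline\tau$ and $\overline\tau$ to single-generation cylinder sums, to evaluate those in closed form, and then to exploit the super-exponential growth of $\mathcal N$ to pin down the $\liminf$ and the $\limsup$.

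\textbf{Step 1 (balls versus cylinders).} Since $B>2$, the strong separation condition holds and all cylinders $I_w$ with $|w|=n$ share the common length $\ell_n=A^{-a_n}B^{-(n-a_n)}$, where $a_n$ counts the levels $k\in\{1,\dots,n\}$ using the ratio $A$ (and weights $p,1-p$); with the convention $N_0:=0$ these are the $k$ with $N_{2i}<k\le N_{2i+1}$ for some $i\ge0$. For $0<r<1$ let $n(r)$ be given by $\ell_{n(r)}\le r<\ell_{n(r)-1}$. By Lemma~\ref{<B<} and the standard ball/cylinder comparison for such Moran constructions (as in \cite[Corollary 4.3]{Falconer1997}) --- every $B(x,r)$ with $x\in X$ both contains and is contained in cylinders of a generation $n(r)+O(1)$, a maximal disjoint family of radius-$r$ balls centred on $X$ has cardinality $2^{\,n(r)+O(1)}$, and $\mu$-masses of cylinders whose generations differ by $O(1)$ agree up to a factor $(\underline m)^{\pm O(1)}$ with $\underline m:=\min\{p,q,1-p,1-q\}$ --- one obtains, for each $s\in\mathbb R$, a constant $C_s$ \emph{independent of $r$} with
\[
C_s^{-1}\,S_{n(r)}(s)\ \le\ \sup\Big\{\sum_i\mu(B_i)^s\Big\}\ \le\ C_s\,S_{n(r)}(s),\qquad S_n(s):=\sum_{w\in\mathcal A^{n}}\mu(I_w)^s,
\]
the supremum being over disjoint closed balls of radius $r$ centred on $X$. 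Since $\log r=\log\ell_{n(r)}+O(1)$ while $\log S_n(s)=O(n)$, this yields $\underline\tau(s)=\liminf_{n}\log S_n(s)/\log\ell_n$ and $\overline\tau(s)=\limsup_{n}\log S_n(s)/\log\ell_n$.

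\textbf{Step 2 (evaluating $S_n$).} As $\mu(I_w)=\prod_{k=1}^{n}m_k(w_k)$, with the level weights $\{m_k(0),m_k(1)\}$ equal to $\{p,1-p\}$ on the $A$-levels and $\{q,1-q\}$ on the $B$-levels, the sum factorises:
\[
S_n(s)=\bigl(p^s+(1-p)^s\bigr)^{a_n}\bigl(q^s+(1-q)^s\bigr)^{n-a_n}.
\]
Writing $u_n:=a_n\log A$ and $v_n:=(n-a_n)\log B$ (so $-\log\ell_n=u_n+v_n$), the definitions of $\beta_1,\beta_2$ give
\[
\frac{\log S_n(s)}{\log\ell_n}=\frac{u_n\,\beta_1(s)+v_n\,\beta_2(s)}{u_n+v_n}\ \in\ \bigl[\min\{\beta_1(s),\beta_2(s)\},\ \max\{\beta_1(s),\beta_2(s)\}\bigr],
\]
a convex combination of $\beta_1(s)$ and $\beta_2(s)$. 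Hence already $\underline\tau(s)\ge\min\{\beta_1(s),\beta_2(s)\}$ and $\overline\tau(s)\le\max\{\beta_1(s),\beta_2(s)\}$.

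\textbf{Step 3 (the two extremal subsequences).} Here the hypothesis $N_{i+1}/N_i\to\infty$ is essential. Along $n=N_{2i+1}$ we have $n-a_n=\sum_{j=0}^{i-1}(N_{2j+2}-N_{2j+1})\le\sum_{j=1}^{i}N_{2j}$, and since $N_{2j}/N_{2j+2}\to0$ these partial sums are $\le2N_{2i}$ for large $i$; thus $n-a_n=o(N_{2i+1})=o(n)$, so $u_n/(u_n+v_n)\to1$ and $\log S_n(s)/\log\ell_n\to\beta_1(s)$. Symmetrically, along $n=N_{2i+2}$ one gets $a_n=\sum_{j=0}^{i}(N_{2j+1}-N_{2j})\le\sum_{j=0}^{i}N_{2j+1}\le2N_{2i+1}=o(n)$, whence $\log S_n(s)/\log\ell_n\to\beta_2(s)$. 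Therefore $\underline\tau(s)\le\min\{\beta_1(s),\beta_2(s)\}$ and $\overline\tau(s)\ge\max\{\beta_1(s),\beta_2(s)\}$, which together with Step 2 gives \eqref{tau-l} and \eqref{tau-u}. The one place needing genuine care is the ball/cylinder comparison of Step 1, especially for $s<0$: there one must produce a well-separated family at a generation $n(r)-O(1)$ (the gaps between generation-$m$ cylinders are $\asymp\ell_{m-1}$, so a bounded number of levels back from $n(r)$ makes them exceed $2r$) and check that no disjoint radius-$r$ ball can carry $\mu$-mass much smaller than a generation-$n(r)$ cylinder, using that each such ball contains a cylinder of generation $n(r)+O(1)$; everything else is routine bookkeeping with the gaps $N_i$.
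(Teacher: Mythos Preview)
Your proof is correct and follows exactly the approach the paper indicates: the paper merely asserts that the lemma ``can easily be obtained from Lemma~\ref{<B<}'' together with the facts that $A>B>2$ and that all cylinders of a given generation have equal length, without writing anything further. Your Steps~1--3 are precisely the computation the paper leaves to the reader --- reducing balls to cylinders via Lemma~\ref{<B<}, factorising $S_n(s)$ as a convex combination of $\beta_1(s)$ and $\beta_2(s)$, and then using $N_{i+1}/N_i\to\infty$ to realise both endpoints along the subsequences $n=N_{2i+1}$ and $n=N_{2i+2}$ --- and the care you flag for $s<0$ in the ball/cylinder comparison is exactly what is needed.
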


    We also need to introduce the large deviations spectra (see also \cite{Barral2015}) which will be useful in our proof. 

	Let $\mu$ be a compactly supported positive and finite Borel measure on a metric space.
	For $0\leq \alpha\leq \beta\leq+\infty$, the lower and upper large deviations spectra $\underline{f}_{\mu}^{LD}$ and $\overline{f}_{\mu}^{LD}$ are given by
	\begin{equation*}\label{LD}
	\underline{f}_{\mu}^{LD}(\alpha,\beta)=\lim_{\varepsilon\to 0}\liminf_{r\rightarrow 0}\frac{\log \sup\#\{i:r^{\beta+\varepsilon}\leq\mu(B(x_i,r)\leq r^{\alpha-\varepsilon})\}}{-\log r},
	\end{equation*}
	\begin{equation*}\label{LDu}
	\overline{f}_{\mu}^{LD}(\alpha,\beta)=\lim_{\varepsilon\to 0}\limsup_{r\rightarrow 0}\frac{\log \sup\#\{i:r^{\beta+\varepsilon}\leq\mu(B(x_i,r)\leq r^{\alpha-\varepsilon})\}}{-\log r},
	\end{equation*}
	where the supremum is taken over all families of disjoint closed balls $B_i=B(x_i,r)$ of radius $r$ with centers $x_i$ in $\supp(\mu)$. 

Now we turn to consider the level sets and prove theorem~\ref{main}.

For any $\alpha\in [\beta_1'(+\infty),\beta_1'(-\infty)]$, if there exists $s\in\mathbb{R}$ such that $\beta'_1(s)=\alpha$, we can define the number $A^{\beta_1(s)}\cdot p^s$, otherwise, $\alpha\in\{\beta_1'(+\infty),\beta_1'(-\infty)\}$, we write $A^{\beta_1(+\infty)}\cdot p^{+\infty}:=\lim_{s\to +\infty}A^{\beta_1(s)}\cdot p^s$ and $A^{\beta_1(-\infty)}\cdot p^{-\infty}:=\lim_{s\to -\infty}A^{\beta_1(s)}\cdot p^s$. 
Also, we use the same notation for $\beta_2$.

\begin{proof}
	\begin{enumerate}
		\item It is obvious because of theorem~\ref{dim-loc-bound}.
        \item Now we consider the level set $\El(\mu,\alpha))$.
        \begin{itemize}
        	\item \textbf{Upper bound for $\dim_{H}(\underline{E}(\mu,\alpha))$ and $\dim_{P}(\underline{E}(\mu,\alpha))$:}\\ 
        One obvious upper bound is $\dim_{H}(\underline{E}(\mu,\alpha))\leq \dim_{H}X=\frac{\log 2}{\log A}$ and  $\dim_{P}(\underline{E}(\mu,\alpha))\leq \dim_{P}X=\frac{\log 2}{\log B}$.
        But it is not sharp for the Hausdorff dimension, so we need the following estimation.
        \begin{itemize}
        	\item For $\alpha\in [\beta_1'(+\infty),\beta_1'(0)]$, noticing the result in \cite[Proposition 1.3 \& inequality (1.5)]{Barral2015}, we know that 
        	$$\dim_{H}\underline{E}(\mu,\alpha)\leq \overline{f}_{\mu}^{LD}(\alpha,\alpha)\leq\underline\tau^{*}(\alpha)=\begin{cases}
        	\beta_1^{*}(\alpha)&\,\alpha\in [\beta_1'(+\infty),\beta_1'(1)],\\
        	\alpha&\,\alpha\in [\beta_1'(1),\beta_1'(0)].
        	\end{cases}$$
            
        	\item For $\alpha\in [\beta_1'(0),\beta_1'(-\infty)]$. Noticing the result in \cite[Proposition 1.3]{Barral2015}, $\dim_{H}\underline{E}(\mu,\alpha)\leq \underline{f}_{\mu}^{LD}(\alpha,+\infty)=:f_1(\alpha)$.
        	
        	We just need to prove that $\beta_1(s)\leq (f_1)^{*}(s)$ for $s\in (-\infty,0]$.
        	If so,
        	\begin{eqnarray*}
        		\beta_1^{*}(\alpha)&=&\inf\{s\alpha-\beta_{1}(s)\}\\
        		&=&\inf_{s\leq 0}\{s\alpha-\beta_{1}(s)\}\\
        		&\geq&\inf_{s\leq 0}\{s\alpha-(f_1)^{*}(s)\}\\
        		&\geq&\inf_{s\leq 0}\{s\alpha-(s\alpha-f_1(\alpha))\}\\
        		&=&f_1(\alpha),
        	\end{eqnarray*} 
        	and then  $\dim_{H}\underline{E}(\mu,\alpha)\leq \beta_1^{*}(\alpha).$
        	
        	Now turn to prove $\beta_1(s)\leq (f_1)^{*}(s)$ for $s\in (-\infty,0]$. We will borrow the main idea in \cite[Section 5.3]{Barral2015}, but make it clear that we are dealing with $s\leq 0$.
        	If $\{B(x_i,r)\}$ is a packing of ${\rm supp}(\mu)$ by disjoint balls, we have for any $t\in\mathbb{R}$,
        	$$\sum_{i}\mu(B(x_i,r))^{s}\geq (\# \{i:\mu(B(x_i,r))\leq r^{t-\epsilon} \})r^{s(t-\epsilon)}.$$ 
        	Taking the supremum over the packings, dividing by $\log r$, taking the lim sup as $r\to 0^{+}$ and then the limit $\epsilon\to 0^{+}$ yields $\beta_{1}(s)=\overline{\tau}(s)\leq st-f_1(t)$ for all $t\in\mathbb{R}$, that is 
        	$\beta_{1}(s)\leq (f_1)^{*}(s)$
        \end{itemize}
    
        \item \textbf{Lower bound for $\dim_{H}(\underline{E}(\mu,\alpha))$ and $\dim_{P}(\underline{E}(\mu,\alpha))$:}
        
        For any $\alpha\in [\beta_1'(+\infty),\beta_1'(-\infty)]$, there exists $s\in\mathbb{R}\cup\{+\infty,-\infty\}$ such that $\beta_2'(s)=\alpha$.
        Choose  $$p_n'=\begin{cases}
        A^{\beta_1(s)}\cdot p^s&\, N_{2i}<n\leq N_{2i+1} \text{ for some } i\in\mathbb{N},\\
        1/2&\, N_{2i+1}<n\leq N_{2i+2}\text{ for some } i\in\mathbb{N}.\end{cases}$$
        to define $\mu'$. 
        
        Noticing 
        $$\frac{-p'\log p-(1-p')\log(1-p)}{\log A}= \beta_1'(s)=\alpha,$$
        where $p'=:A^{\beta_1(s)}\cdot p^s$.
        From lemma~\ref{an limit} and~\ref{dimloc an} and the choice of $s$, we get that $\mu'(\underline{E}(\mu,\alpha))=1$ and for $\mu'$-almost every $x\in X$ one has $\underline{\dim}_{\rm loc}(\mu',x)=\frac{-p'\log p'-(1-p')\log(1-p')}{\log A}=s\alpha-\beta_{1}(s)= \beta_{1}^*(\alpha)$ and $\overline{\dim}_{\rm loc}(\mu',x)=\frac{\log 2}{\log B}$.
        This gives that $$\dim_{H}\underline{E}(\mu,\alpha)\geq\beta_1^{*}(\alpha) $$ and $$\dim_{P}\underline{E}(\mu,\alpha)\geq\frac{\log 2}{\log B}.$$

        But it is not sharp for $\dim_{H}\underline{E}(\mu,\alpha)$ when $\alpha\in  (\beta_1'(1),\beta_1'(0))$. 
        Now take $\alpha_1=\frac{-p\log p-(1-p)\log(1-p)}{\log A}$ and $\alpha_2=\frac{-q\log q-(1-q)\log(1-q)}{\log B}$, define $$p_n'=\begin{cases}
        p&\, N_{2i}<n\leq N_{2i}'\text{ for some } i\in\mathbb{N},\\
        1/2&\, N_{2i}'<n\leq N_{2i+1}\text{ for some } i\in\mathbb{N}\\
        q &\, N_{2i+1}<n\leq N_{2i+2} \text{ for some } i\in\mathbb{N}.
        \end{cases}$$
        where $N_{2i}'=\min\{N_{2i+1}, \lfloor\frac{(\alpha_2-\alpha)(\sum_{k=0}^{i-1}(N_{2k+2}-N_{2k+1}))\log B}{(\alpha-\alpha_1)\log A}\rfloor-\sum_{k=0}^{i-1}(N_{2k}'-N_{2k})\}$. This also gives a measure $\mu'$.
        
        We use the same notation as in lemma~\ref{an limit} and turn to analysis $\frac{\sum_{k=1}^{n}x_k}{\sum_{k=1}^{n}b_k}$. 
        For any $n\in \mathbb{N}$, we assume that $l_1(n)=\#\{k\leq n:p_k'=p\}, l_2(n)=\#\{k\leq n:p_k'=1/2\}$ and $l_3(n)=\#\{k\leq n:p_k'=q\}$ 
        then $l_1(n)+l_2(n)+l_3(n)=n$. From the choice of $N_{2i}'$, we can get 
        $l_1(n)\leq \frac{(\alpha_2-\alpha)l_3(n)\log B}{(\alpha-\alpha_1)\log A}$,
        which yields
        \begin{equation}\label{l123-ineq}
        \alpha_1l_1(n)\log A+\alpha_2l_3(n)\log B\geq \alpha(l_1(n)\log A+l_3(n)\log B).
        \end{equation}
        
        For $n$ large enough, on the one hand, 
        \begin{eqnarray*}
        &&\frac{\sum_{k=1}^{n}x_k}{\sum_{k=1}^{n}b_k}\\
        &=&\frac{\alpha_1l_1(n)\log A-\frac{(\log q+\log(1-q))l_2(n)}{2}+\alpha_2l_3(n)\log B}{l_1(n)\log A+(l_2(n)+l_3(n))\log B}\\
        &\geq& \frac{(\alpha_1l_1(n)\log A+\alpha_2l_3(n)\log B)-\frac{(\log q+\log(1-q))l_2(n)}{2}}{(l_1(n)\log A+l_3(n)\log B)+l_2(n)\log B},\\
        \end{eqnarray*}
        where the inequality from ~\eqref{l123-ineq}.
        Now we can get 
        \begin{equation}\label{A-n-l}
        \frac{\sum_{k=1}^{n}x_k}{\sum_{k=1}^{n}b_k}\geq \alpha,
        \end{equation}
        since the inequlity in \eqref{ass}.
        
        On the other hand, for $n=N_{2i+1}'$, we notice that 
        $$l_1(n)\geq \frac{(\alpha_2-\alpha)l_3(n)\log B}{(\alpha-\alpha_1)\log A}-1,$$
        then
       	\begin{eqnarray*}
       		&&\frac{\sum_{k=1}^{n}x_k}{\sum_{k=1}^{n}b_k}\\
       		&=&\frac{\alpha_1l_1(n)\log A-\frac{(\log q+\log(1-q))l_2(n)}{2}+\alpha_2l_3(n)\log B}{l_1(n)\log A+(l_2(n)+l_3(n))\log B}\\
       		&=& \frac{(\alpha_1(l_1(n)+1)\log A+\alpha_2l_3(n)\log B)-\alpha_1\log A-\frac{(\log q+\log(1-q))l_2(n)}{2}}{(l_1(n)\log A+l_3(n)\log B)+l_2(n)\log B}\\
       		&\leq&\frac{\alpha((l_1(n)+1)\log A+l_3(n)\log B)-\alpha_1\log A-\frac{(\log q+\log(1-q))l_2(n)}{2}}{((l_1(n)+1)\log A+l_3(n)\log B)-\log A+l_2(n)\log B}
       	\end{eqnarray*} 
       The assumption $\lim_{i\to \infty}\frac{N_{i+1}}{N_{i}}=\infty$ yields $\lim_{i\to \infty}\frac{l_2(N_{2i+1}')\log B}{l_1(N_{2i+1}')\log A+l_3(N_{2i+1}')\log B}=0$, and then
       $\limsup_{i\to\infty}\frac{\sum_{k=1}^{N_{2i+1}'}x_k}{\sum_{k=1}^{N_{2i+1}'}b_k}\leq\alpha$. Noticing inequality~\eqref{A-n-l}, we have
       $$\liminf_{n\to\infty}\frac{\sum_{k=1}^{n}x_k}{\sum_{k=1}^{n}b_k}=\alpha.$$
       Hence $\mu'(\underline{E}(\mu,\alpha))=1$ by lemma~\ref{an limit} and ~\ref{dimloc an}.
       A similar method yields $$\underline{\dim}_{\rm loc}(\mu',x)=\min\{\alpha,\frac{\log 2}{\log A}\}$$ for $\mu'$-almost every $x\in X$.
        
       We now can conclude that $\dim_{H}\underline{E}(\mu,\alpha)\geq \min\{\alpha,\frac{\log 2}{\log A}\}$ from \cite[item (a) in Proposition 2.3]{Falconer1997}.
    \end{itemize}

      \item We now turn to consider $\Eu(\mu,\alpha)$.
      \begin{itemize}
      	\item \textbf{Upper bound for $\dim_H(\overline{E}(\mu,\alpha))$:}
      	The one obvious upper bound is $\dim_H X=\frac{\log 2}{\log A}$ and the other is the following.
      	
      \begin{itemize}
      \item For $\alpha\in [\beta_2'(1),\beta_2'(-\infty)]$. noticing the result in \cite[Proposition 1.3 \& inequality (1.5)]{Barral2015}, we know that
      $$\dim_{H}\overline{E}(\mu,\alpha)\leq \overline{f}_{\mu}^{LD}(\alpha,\alpha)\leq \underline\tau^{*}(\alpha)=\beta_2^{*}(\alpha).$$
      \item For $\alpha\in [\beta_2'(+\infty),\beta_2'(1))$, the proof is the same as the proof of the upper bound for $\dim_{H}(\underline{E}(\mu,\alpha))$ with $\alpha\in [\beta_1'(0),\beta_1'(-\infty)]$, so we just list the key points. 
       
      Notice that $\dim_{H}\overline{E}(\mu,\alpha)\leq \underline{f}_{\mu}^{LD}(0,\alpha):=f_2(\alpha)$.
      Since $\beta_2^{*}(\alpha)=\inf_{s\geq 1}\{s\alpha-\beta_{2}(s)\}$, we just need to prove $\beta_2(s)\leq (f_2)^{*}(s)$ for $s\in [1,+\infty]$.
      If $\{B(x_i,r)\}$ is a packing of ${\rm supp}(\mu)$ by disjoint balls, we have for any $t\in\mathbb{R}$,
      $$\sum_{i}\mu(B(x_i,r))^{s}\geq (\# \{i:\mu(B(x_i,r))\geq r^{t+\epsilon} \})r^{s(t+\epsilon)}.$$
      and $\beta_{2}(s)=\overline{\tau}(s)$ for $s\in [1,+\infty]$.
      \end{itemize}
  
     \item \textbf{The uppper bound for $\dim_P(\overline{E}(\mu,\alpha))$}:
     
     Noticing \cite[Proposition 1.3 \& inequality (1.5)]{Barral2015}, we have
     \begin{equation*}\label{key}
     \dim_P(\overline{E}(\mu,\alpha))\leq\sup\{\tau^*(\alpha'): \alpha'\leq \alpha\}=\begin{cases}
     \alpha& \alpha\in[\beta_{2}'(+\infty),\beta_{2}'(1)),\\
     \beta_2^*(\alpha)& \,\alpha\in [\beta_{2}'(1),\beta_{2}'(0))\\
     \frac{\log 2}{\log B}& \,\alpha\in [\beta_{2}'(0),\beta_{2}'(-\infty)].
     \end{cases}
     \end{equation*}
     
  \item\textbf{ Lower bound for  $\dim_H(\overline{E}(\mu,\alpha))$:}\\
  For $\alpha\in [\beta_2'(+\infty),\beta_2'(-\infty)]$, there exists $s\in \mathbb{R}\cup\{+\infty,-\infty\}$ such that $\beta_2'(s)=\alpha$, choose  $$p_n'=\begin{cases}
  1/2&\, N_{2i}<n\leq N_{2i+1} \text{ for some } i\in\mathbb{N}.\\
  B^{\beta_2(s)}\cdot q^s&\, N_{2i+1}<n\leq N_{2i+2}\text{ for some } i\in\mathbb{N}\end{cases}.$$
  Now we have defined a measure $\mu'$. 
  
  Noticing that 
  $$\frac{-q'\log q-(1-q')\log(1-q)}{\log B}= \beta_2'(s)=\alpha,$$
  where $q'=B^{\beta_2(s)}\cdot q^s$.
  
  From lemma~\ref{dimloc an} and~\ref{an limit} and the choice of $s$, we get that $\mu'(\underline{E}(\mu,\alpha))=1$ and for $\mu'$-almost every $x\in X$ one has $\underline{\dim}_{\rm loc}(\mu',x)=\min\{\frac{\log 2}{\log A},\beta_2^*(\alpha)\}$.
  This gives $$\dim_{H}\underline{E}(\mu,\alpha)\geq\min\{\frac{\log 2}{\log A},\beta_2^*(\alpha)\}.$$
  
  \item \textbf{Lower bound for  $\dim_P(\overline{E}(\mu,\alpha))$:}
  \begin{itemize}
  	\item For $\alpha\in [\beta_2'(+\infty),\beta_2'(1))$, the proof is similar with the proof of the lower bound for $\dim_{H}(\underline{E}(\mu,\alpha))$ for $\alpha\in (\beta_2'(1),\beta_2'(0))$, so we again just list the key points.
  	 
  	First, there exists $s\in(1,+\infty]$ such that $\beta_2'(s)=\alpha$.
  	
  	Second, take $\alpha_1=\frac{-p\log p-(1-p)\log(1-p)}{\log A}$ and $\alpha_2=\frac{-q\log q-(1-q)\log(1-q)}{\log B}$.
  	Define $$p_n'=\begin{cases}
  	p&\, N_{2i}<n\leq N_{2i+1} \text{ for some } i\in\mathbb{N},\\
  	q&\, N_{2i+1}<n\leq N_{2i+1}'\text{ for some } i\in\mathbb{N},\\
  	q^s B^{\beta_2(s)}&\, N_{2i+1}'<n\leq N_{2i+2}\text{ for some } i\in\mathbb{N}.
  	\end{cases}$$
  	where $N_{2i+1}'=\min\{N_{2i+2}, \lfloor\frac{(\alpha_2-\alpha)(\sum_{k=0}^{i}(N_{2k+1}-N_{2k}))\log A}{(\alpha-\alpha_1)\log B}\rfloor-\sum_{k=0}^{i-1}(N_{2k+1}'-N_{2k+1})\}$. This also gives a measure $\mu'$.
  	We turn to analysis $\frac{\sum_{k=1}^{n}x_k}{\sum_{k=1}^{n}b_k}$. 
  	For any $n\in \mathbb{N}$, we assume that $l_1(n)=\#\{k\leq n:p_k'=p\},l_2(n)=\#\{k\leq n:p_k'=q\}$ and $l_3(n)=\#\{k\leq n:p_k'=q^s B^{\beta_2(s)}\}$ 
  	then $l_1(n)+l_2(n)+l_3(n)=n$.
  	In the same way, for $n$ large enough, we have
  	\begin{equation*}
   \frac{\sum_{k=1}^{n}x_k}{\sum_{k=1}^{n}b_k}\leq \alpha
  	\end{equation*} 
  	and also we pay our attention to $n=N_{2i+1}'$
      	\begin{equation*}
    \frac{\sum_{k=1}^{n}x_k}{\sum_{k=1}^{n}b_k}\geq \frac{\alpha(l_1(n)\log A+(l_2(n)+1)\log B)-\alpha_2\log B+\alpha l_3(n)\log B}{(l_1(n)\log A+(l_2(n)+1)\log B)-\log B+l_3(n)\log B}.
    \end{equation*}
     Then,
     $\limsup_{i\to\infty}\frac{\sum_{k=1}^{N_{2i}'}x_k}{\sum_{k=1}^{N_{2i}'}b_k}=\alpha$.
     Hence $\mu'(\overline{E}(\mu,\alpha))=1$ by lemma~\ref{an limit} and ~\ref{dimloc an}.
  	A similar method yields $\overline{\dim}_{\rm loc}(\mu',x)=\max\{\alpha,\beta_2^{*}(\alpha)\}$ for $\mu'$-almost every $x\in X$. 
  	We now get that $\dim_{P}\overline{E}(\mu,\alpha)\geq \max\{\alpha,\beta_2^{*}(\alpha)\}$.
  	
  	\item $\alpha\in (\beta_2'(1),\beta_2'(-\infty)]$. There exists $s\in(1,+\infty]$ such that $\beta_2'(s)=\alpha$. Let $\beta_2^*(\alpha_0)=\max\{\beta_2^*(\alpha'):\alpha'\leq \alpha\},$ we can assume that $\alpha_0=\beta_2'(s_0)$.
  	Define $$p_n'=\begin{cases}
  	1/2&\, N_{2i-1}<n\leq N_{2i} \text{ for some } i\in\mathbb{N},\\
  	B^{\beta_2(s_0)}p^{s_0}&\, N_{2i}<n\leq N_{2i}'\text{ for some } i\in\mathbb{N},\\
  	B^{\beta_2(s)}p^{s}&\, N_{2i}'<n\leq N_{2i+1}\text{ for some } i\in\mathbb{N}.
  	\end{cases}$$
  	Where $\{N_{2i}'\}_{i\in\mathbb{N}}$ is a sequence of numbers such that
  	$N_{2i}<N_{2i}'\leq N_{2i+1}$, $\lim_{i\to\infty}\frac{N_{2i}'}{N_{2i}}=+\infty$ and 
  	$\lim_{i\to\infty}\frac{N_{2i+1}}{N_{2i}'}=+\infty$.
  	This can be done since $\lim_{i\to \infty}\frac{N_{i+1}}{N_{i}}=+\infty$.
  	Now we can check that 
  	$\limsup_{n\to\infty}\frac{\sum_{k=1}^{n}x_k}{\sum_{k=1}^{n}b_k}=\alpha$
  	since $\lim_{i\to\infty}\frac{N_{2i+1}}{N_{2i}'}=+\infty$ and $\alpha\geq \alpha_0\geq \frac{\log 2}{\log A}$.
  	 Hence $\mu'(\overline{E}(\mu,\alpha))=1$ by lemma~\ref{an limit} and ~\ref{dimloc an}.
  	 Also, noticing that $\lim_{i\to\infty}\frac{N_{2i}'}{N_{2i}}=+\infty$ and $\beta_2^*(\alpha_0)=\max\{\beta_2^*(\alpha'):\alpha'\leq \alpha\}\geq \max\{\beta_2^*(\alpha),\frac{\log 2}{\log A}\}$, a similar method yields $\overline{\dim}_{\rm loc}(\mu',x)=\beta_2^*(\alpha_0)$ for $\mu'$-almost every $x\in X$.
  	 We now get that $\dim_{P}\overline{E}(\mu,\alpha)\geq\max\{\beta_2^*(\alpha'):\alpha'\leq \alpha\}$.   	 	
  \end{itemize}
\end{itemize}
\end{enumerate}
\end{proof}

\begin{remark}
	\begin{enumerate}
		\item 	In fact, the upper bound can also be obtained if we use the result in~\cite[Proposition 2.5 \& 2.6]{Olsen1995}. (In our notation, the lower and upper $L^q$-spectrum plays the same roles as $-B$ and $-b$, see \cite{BBP2003} for details, but we need to recall the functions $b,B$ and show their relationships with the functions $\overline{\tau},\underline\tau$.) 
		\item  For the Hausdorff dimension of the level sets $\El(\mu,\alpha)$, the lower bound can be easily  obtained by using the auxiliary measure $\mu'$ as the classical method when $\alpha\in [\beta_{1}'(+\infty),\beta_{1}'(1)]\cup [\beta_{1}'(0),\beta_{1}'(-\infty)]$ . But it is a trouble for the part $[\beta_{1}'(1),\beta_{1}'(0)]]$. This part is surprising to the author when it turns out to be $\min\{\alpha,\dim_H X\}$ which is related to two points $-\frac{q\log q+(1-q)\log(1-q)}{\log A}$ and $-\frac{p\log p+(1-p)\log(1-p)}{\log B}$.
		Until now the upper bound is easy for the part $\alpha\in [\beta_1'(+\infty),\beta_1'(0)]$, since $\overline{f}_{\mu}^{LD}(\alpha,\alpha)$ (also $\tau^*_\mu(\alpha)=\beta_1^*(\alpha)$) gives the sharp bound, but it is not easy for $[\beta_1'(0),\beta_1'(-\infty)]$. In this part the Legendre transform $\tau^*$ is not the sharp upper bound, so we turn to the lower large derivation spectrum $\underline{f}_{\mu}^{LD}(\alpha,+\infty)$ and prove that it coincides to $\overline{\tau}^*=\beta_1^*$ in this part.    
		
		For packing dimension, it is equal to that of the whole space, since we almost do not disturb the part when $N_{2i+1}<n\leq N_{2i+2}$ for $\El(\mu,\alpha)$.
		\item For the level sets $\Eu(\mu,\alpha)$, there are some dualities as for $\El(\mu,\alpha)$.
		
		The Hausdorff dimension is from a traditional method except for the upper bound when $\alpha\in[\beta_2'(+\infty),\beta_2'(1))$ where we borrow $\underline{f}_{\mu}^{LD}(0,\alpha)$. 
		
		The difficulty comes from the lower bound of $\dim_{P} \Eu(\mu,\alpha)$ for the part $[\beta'_2(+\infty),\beta_2'(1)]$, where we again deal with the linear part by the two points $-\frac{p\log p+(1-p)\log(1-p)}{\log A}$ and $-\frac{q\log q+(1-q)\log(1-q)}{\log B}$.
	\end{enumerate}
	
\end{remark}

\subsection{Proof of Theorem~\ref{main2}}
The method to construct a corresponding auxiliary measure can get the sharp lower bound. A clear upper bound can be obtained if we notice that $E(\mu,\alpha,\alpha')=\El(\mu,\alpha)\cap\Eu(\mu,\alpha')$. But it is not sharp for the Hausdorff dimension when $(\alpha,\alpha')\in [\beta'_1(1),\beta'_1(0)]\times[\beta'_2(-\infty),\beta'_2(1))$ and for the packing dimension when $(\alpha,\alpha')\in (\beta'_1(1),\beta'_1(-\infty)]\times[\beta'_2(+\infty),\beta'_2(1))$. To obtain the upper bound in these situations we need to use \cite[Proposition 2.3]{Falconer1997}. 

\begin{proof}   
\begin{enumerate}
\item  Now we consider the Hausdorff dimension for $E(\mu,\alpha,\alpha')$. 
\begin{itemize}
	\item $(\alpha,\alpha')\notin [\beta'_1(1),\beta'_1(0)]\times[\beta'_2(+\infty),\beta'_2(1))$\\
	The upper bound can be obtained easily since $E(\mu,\alpha,\alpha')=\El(\mu,\alpha)\cap\Eu(\mu,\alpha')$.\\
	Let us focus our attention to the lower bound.\\
	If $\alpha\notin[\beta'_1(1),\beta'_1(0)]$, the lower bound can be obtained by the following auxiliary measure $\mu'$ defined thought $p_n'$ with
	$$p_n'=\begin{cases}
	A^{\beta_1(s_1)}p^{s_1}&\, N_{2i}<n\leq N_{2i+1}\text{ for some } i\in\mathbb{N},\\
	B^{\beta_2(s_2)}q^{s_2} &\, N_{2i+1}<n\leq N_{2i+2} \text{ for some } i\in\mathbb{N},
	\end{cases}$$
	where $\beta'_1(s_1)=\alpha,\beta'_2(s_2)=\alpha'$.
	It is easy to show that $\mu'(E(\mu,\alpha,\alpha'))=1$ and $\dim_H \mu'\geq \min\{\beta_1^*(\alpha),\beta_2^*(\alpha')\}=\min\{\dim_H(\El(\mu,\alpha)),\dim_H(\Eu(\mu,\alpha'))\}$.
	The result yields since $\dim_HE(\mu,\alpha,\alpha')\geq \dim_H \mu'$.\\
	If $\alpha\in[\beta'_1(1),\beta'_1(0)]$ and $\alpha'\notin[\beta'_2(+\infty),\beta'_2(1))$, choose $s_2\in[-\infty,1]$ such that $\beta'_2(s_2)=\alpha'$,
	let
	$$p_n'=\begin{cases}
	p&\, N_{2i}<n\leq N_{2i}'\text{ for some } i\in\mathbb{N},\\
	1/2&\, N_{2i}'<n\leq N_{2i+1}\text{ for some } i\in\mathbb{N},\\
	B^{\beta_2(s_2)}q^{s_2} &\, N_{2i+1}<n\leq N_{2i+1}' \text{ for some } i\in\mathbb{N},\\
	q &\, N_{2i+1}'<n\leq N_{2i+2} \text{ for some } i\in\mathbb{N},
	\end{cases}$$
as a similar way in the lower bound for $\dim_H \El(\mu,\alpha)$ (Also need to choose proper $\{N_i'\}_{i\in\mathbb{N}}$, but it is almost the same as in the proof of theorem~\ref{main}, so we omit the details). Then we can obtain the lower bound. 
	 
	\item If $\alpha'\in[\beta'_2(+\infty),\beta_{2}'(1))$ with $g(\alpha')\leq\beta_1'(0)$ and $\alpha\in[g(\alpha'),\beta_1'(0))$. Recall that we take the tangent to the graph $\beta_1^*$ passing through the point $(\alpha',\beta_2^*(\alpha'))$ and denote the point of tangency by $(g(\alpha'),\beta_1^*(g(\alpha')))=:(\alpha_1,\beta_1^*(\alpha_1))$.
	Then take $s_1\in[0,1], s_2\in(1,+\infty)$ such that $\beta_1'(s_1)=\alpha_1$ and $\beta_2'(s_2)=\alpha'$.
	
	Choose proper $\{N_{2i}'\}_{i\in\mathbb{N}}$, let
	 $$p_n'=\begin{cases}
	 A^{\beta_1(s_1)}p^{s_1}&\, N_{2i}<n\leq N_{2i}'\text{ for some } i\in\mathbb{N},\\
	 1/2&\, N_{2i}'<n\leq N_{2i+1}\text{ for some } i\in\mathbb{N},\\
	 B^{\beta_2(s_2)}q^{s_2} &\, N_{2i+1}<n\leq N_{2i+2} \text{ for some } i\in\mathbb{N},
	 \end{cases}$$
	 We can obtain that $\dim_H E(\mu,\alpha,\alpha')\geq\min\{\frac{\log 2}{\log A},s_1\alpha-\beta_1(s_1)\}$ in the same way as in the proof of theorem~\ref{main}.\\
	 Now we turn to the upper bound, a crucial observation is $(\beta_i^*)'(\beta_i'(s))=s$ for any $s\in\mathbb{R}$ and $i=1,2$. It is easily obtained by differentiation of a composition function. Let 
	$$p_n'=\begin{cases}
	A^{\beta_1(s_1)}p^{s_1}&\, N_{2i}<n\leq N_{2i+1}\text{ for some } i\in\mathbb{N},\\
	B^{\beta_2(s_2)}q^{s_2} &\, N_{2i+1}<n\leq N_{2i+2} \text{ for some } i\in\mathbb{N}.
	\end{cases}$$
	This defines a measure denoted by $\mu'$. Now we want to show that for any $x\in E(\mu,\alpha,\alpha')$ we have $\dl(\mu',x)\leq s_1\alpha-\beta_1(s_1)$. If so, the upper bound is obtained from \cite[Proposition 2.3 (b)]{Falconer1997} and $\dim_H X= \frac{\log 2}{\log A}$.
	
	Fix $x\in E(\mu,\alpha,\alpha')$ that is for any $\epsilon>0$, there exists $N\in\mathbb{N}$ such that for any $n\geq N$,
	$d(\mu,x,n)\leq \alpha'+\epsilon$, also for any $N'\in\mathbb{N}$, there exists $n'\geq N'$ with $d(\mu,x,n')\leq \alpha+\epsilon$.

	Choose $i_0$ large enough with $N_{2i_0}>N$ such that for any $i\geq i_0$ one has
	 \begin{equation}\label{control mu}
	 1\leq\frac{\log \mu(I_{N_{2i}}(x))}{\log \mu(I_{N_{2i}}(x))-\log \mu(I_{N_{2i-1}}(x))}\leq 1+\epsilon
	 \end{equation}
	 \begin{equation}\label{control mu'}
	 1\leq\frac{\log \mu'(I_{N_{2i}}(x))}{\log \mu'(I_{N_{2i}}(x))-\log \mu'(I_{N_{2i-1}}(x))}\leq 1+\epsilon
	 \end{equation}
	 \begin{equation}\label{control length}
	 1\leq\frac{\log |I_{N_{2i}}(x)|}{\log |I_{N_{2i}}(x)|-\log |I_{N_{2i-1}}(x)|}\leq 1+\epsilon
	 \end{equation}
	
	For any $n>N_{2i_0}$ such that $d(\mu,x,n)\leq \alpha+\epsilon$, choose the largest $i$ with $N_{2i}\leq n$, one has $\alpha_2:=d(\mu,x,N_{2i})\leq \alpha'+\epsilon$.
	Now we assume that $$\frac{\log \mu(I_{n}(x))-\log \mu(I_{N_{2i}}(x))}{\log |I_{n}(x)|-\log |I_{N_{2i}}(x)|}=:\alpha_0.$$
	Then we have:
	\begin{equation}\label{under dimloc mu}
	\frac{\alpha_2\log(|I_{N_{2i}}(x)|)+\alpha_0(\log |I_{n}(x)|-\log |I_{N_{2i}}(x)|)}{\log(|I_{N_{2i}}(x)|)+(\log |I_{n}(x)|-\log |I_{N_{2i}}(x)|)}\leq \alpha+\epsilon.
	\end{equation}
	
	Let us illustrate these relationships by figure~\ref{fig:eab--1}
		\begin{figure}[h]  
		\begin{minipage}[t]{0.5\linewidth}  
			\centering  
			\includegraphics[width=0.99\linewidth]{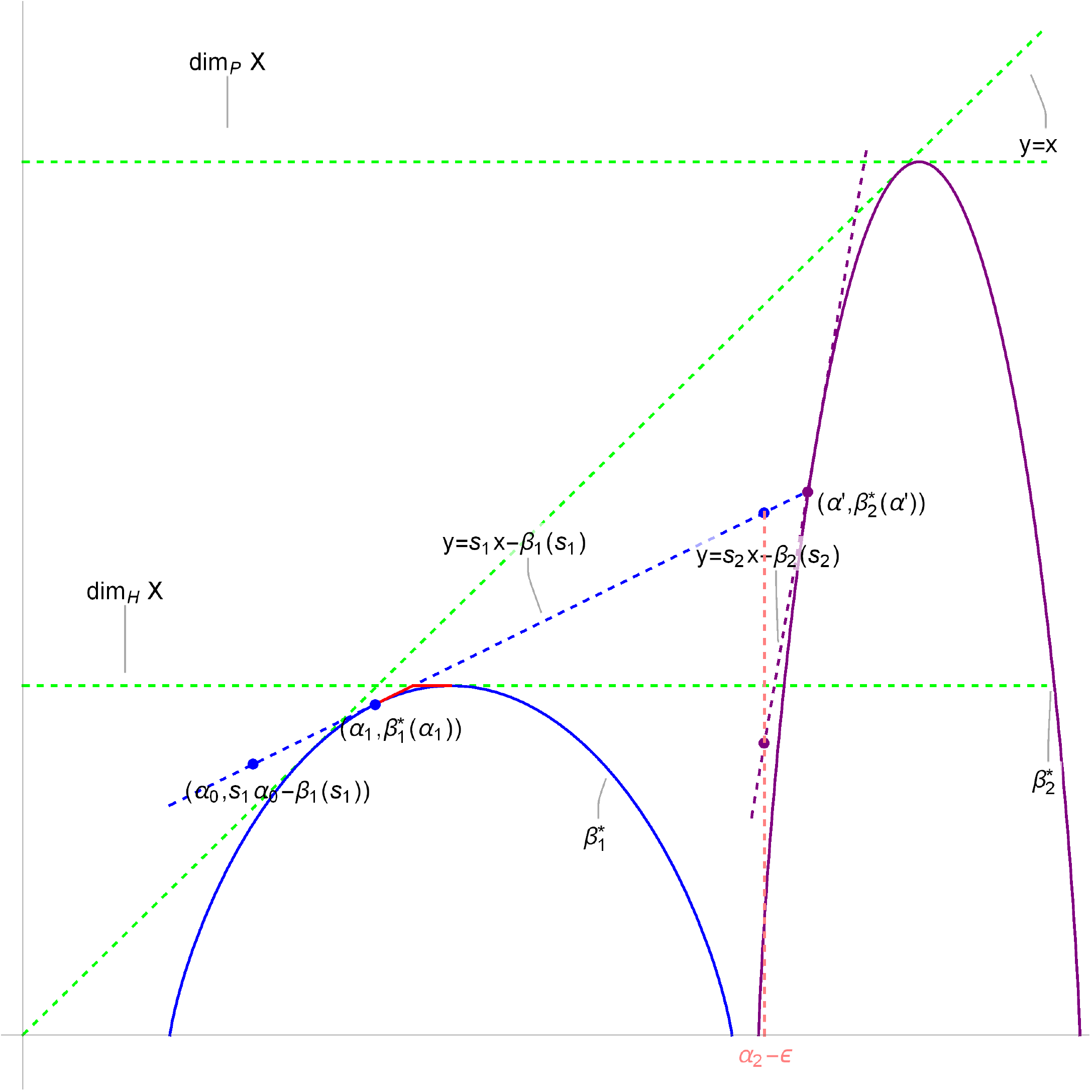}  
			\caption{}  
			\label{fig:eab--1}  
		\end{minipage}
		\begin{minipage}[t]{0.5\linewidth}  
			\centering  
			\includegraphics[width=0.99\linewidth]{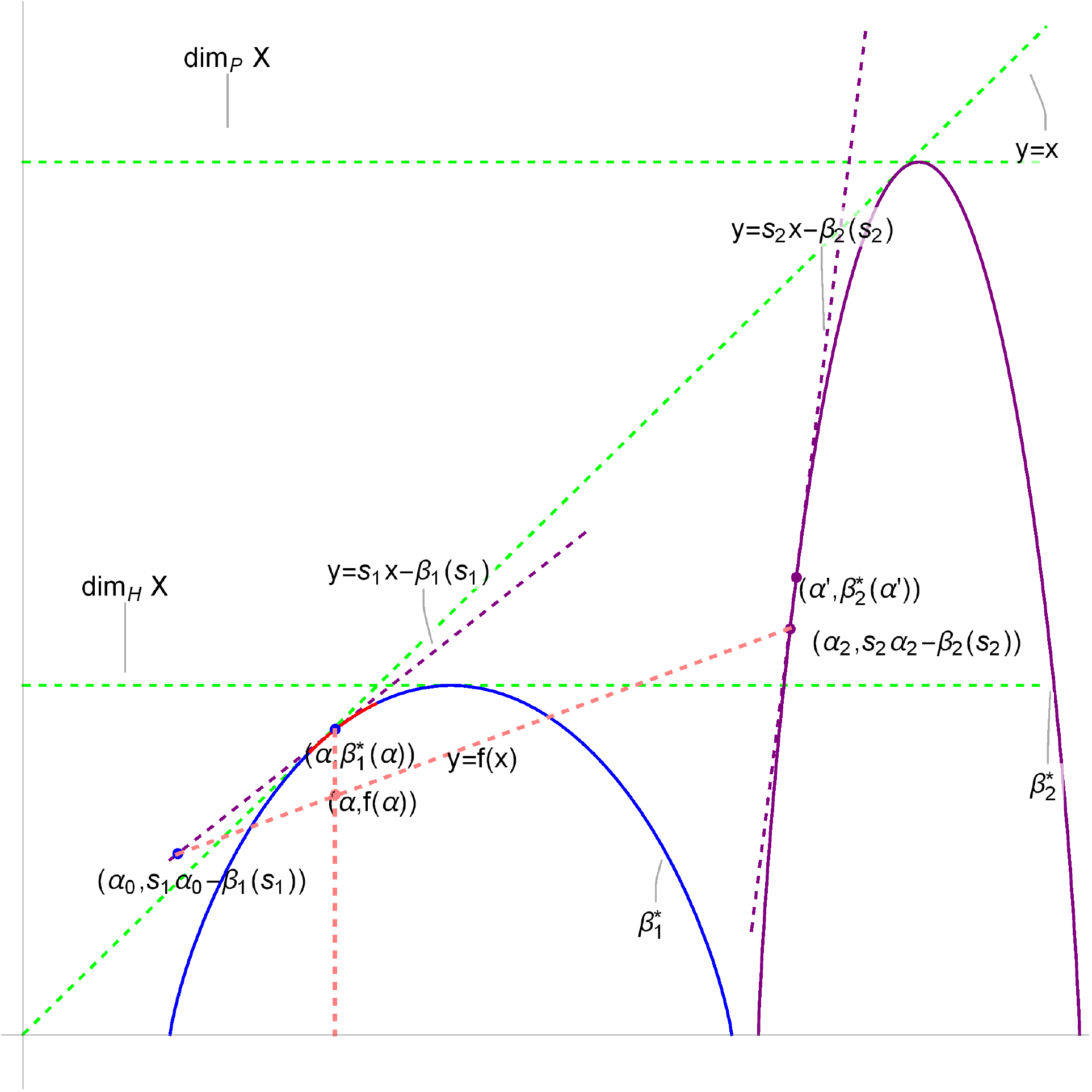}  
			\caption{}  
			\label{fig:eab--2}  
		\end{minipage}  
	\end{figure}
	
	Now we turn to estimate $\log \mu'(I_{N_{2i}}(x))$. First we notice inequality~\eqref{control mu'} and the definition of $\mu'$, one has 
	\begin{eqnarray*}
		\log \mu'(I_{N_{2i}}(x))&\geq&(\log \mu'(I_{N_{2i}}(x))-\log \mu'(I_{N_{2i-1}}(x)))(1+\epsilon)\\
		&=&(s_2(\log \mu(I_{N_{2i}}(x))-\log \mu(I_{N_{2i-1}}(x))))(1+\epsilon)\\
		&&-\beta_2(s_2)(\log |I_{N_{2i}}(x)|-\log |I_{N_{2i-1}}(x)|)(1+\epsilon).
	\end{eqnarray*}
    Second, since $s_2>1$, $\beta_{2}(s_2)>0$ and inequality~\eqref{control mu} and~\eqref{control length},  one has 
	\begin{eqnarray*}
		\log \mu'(I_{N_{2i}}(x))&\geq&s_2(1+\epsilon)\log \mu(I_{N_{2i}}(x))-(1+\epsilon)\beta_2(s_2)\log |I_{N_{2i}}(x)|\\
		&=&s_2\alpha_2(1+\epsilon)\log |I_{N_{2i}}(x)|-(1+\epsilon)\beta_2(s_2)\log |I_{N_{2i}}(x)|\\
		&=&(s_2\alpha_2-\beta_2(s_2))(1+\epsilon)\log |I_{N_{2i}}(x)|
	\end{eqnarray*}
   Third, we notice $\alpha_2\leq \alpha'+\epsilon$, i.e. $\alpha_2-\epsilon\leq \alpha'$, then from figure~\ref{fig:eab--3}, we have   
   $$s_2(\alpha_2-\epsilon)-\beta_2(s_2)\leq s_1(\alpha_2-\epsilon))-\beta_1(s_1)\leq s_1\alpha_2-\beta_1(s_1).$$
   That is $s_2\alpha_2-\beta_2(s_2)\leq s_1\alpha_2-\beta_1(s_1)+S_2\epsilon$.
   So     
   \begin{equation}\label{control mu' N}
  \log \mu'(I_{N_{2i}}(x))\geq(s_1\alpha_2-\beta_1(s_1)+s_2\epsilon)(1+\epsilon)\log |I_{N_{2i}}(x)|.
   \end{equation}
Also from the construction of $\mu'$ one has
	\begin{equation}\label{control mu' n}
	\log \mu'(I_{n}(x))-\log \mu'(I_{N_{2i}}(x))=(s_1\alpha_0-\beta_1(s_1))(\log |I_{n}(x)|-\log |I_{N_{2i}}(x)|)
	\end{equation}
	Then
	\begin{eqnarray*}
		&&d(\mu',x,n)\\
		&=&\frac{\log \mu'(I_n(x))}{\log (|I_n(x)|)}\\
		&=&\frac{\log \mu'(I_{N_{2i}}(x))+\log \mu'(I_{n}(x))-\log \mu'(I_{N_{2i}}(x))}{\log(|I_{N_{2i}}(x)|)+(\log |I_{n}(x)|-\log |I_{N_{2i}}(x)|)}\\
		&\leq&\frac{(s_1\alpha_2-\beta_1(s_1)+s_2\epsilon)(1+\epsilon)\log |I_{N_{2i}}(x)|+(s_1\alpha_0-\beta_1(s_1))(\log |I_{n}(x)|-\log |I_{N_{2i}}(x)|)}{\log(|I_{N_{2i}}(x)|)+(\log |I_{n}(x)|-\log |I_{N_{2i}}(x)|)}\\
		&&(\text{here we use inequality~\eqref{control mu' N} and equation~\ref{control mu' n}})\\
		&\leq&\frac{(s_1\alpha_2-\beta_1(s_1))\log |I_{N_{2i}}(x)|+(s_1\alpha_0-\beta_1(s_1))(\log |I_{n}(x)|-\log |I_{N_{2i}}(x)|)}{\log(|I_{N_{2i}}(x)|)+(\log |I_{n}(x)|-\log |I_{N_{2i}}(x)|)}\\
		&&+s_2\epsilon(1+\epsilon)+(s_1\alpha_2-\beta_1(s_1))\epsilon\\
		&\leq&\frac{s_1\alpha_2\log |I_{N_{2i}}(x)|+s_1\alpha_0(\log |I_{n}(x)|-\log |I_{N_{2i}}(x)|)}{\log(|I_{N_{2i}}(x)|)+(\log |I_{n}(x)|-\log |I_{N_{2i}}(x)|)}-\beta_1(s_1)\\
		&&+s_2\epsilon(1+\epsilon)+(s_1\alpha_2-\beta_1(s_1))\epsilon\\
		&\leq&s_1(\alpha+\epsilon)-\beta_1(s_1)+s_2\epsilon(1+\epsilon)+(s_1\alpha_2-\beta_1(s_1))\epsilon\\
		 && \text{(see inequality~\eqref{under dimloc mu})}
	\end{eqnarray*}
    That is
    \begin{equation}\label{loc dim mu'1}
    d(\mu',x,n)\leq s_1(\alpha+\epsilon)-\beta_1(s_1)+s_2\epsilon(1+\epsilon)+(s_1\alpha_2-\beta_1(s_1))\epsilon
    \end{equation}
    for any $n>N_{2i_0}$ with $d(\mu,x,n)\leq \alpha+\epsilon$.
	
	 Noticing for any $N'\in\mathbb{N}$, there exists $n\geq N'$ with $d(\mu,x,n)\leq \alpha+\epsilon$, so that 
	$$\liminf_{n\to\infty}d(\mu',x,n)\leq s_1\alpha-\beta_1(s_1),$$
	and then $\dl(\mu',x)\leq s_1\alpha-\beta_1(s_1)$ from lemma~\ref{dimloc an}.

	\item  $\alpha'\in[\beta_{2}'(+\infty),\beta_{2}'(1))$ and $\alpha\in[\beta_1'(1),\min\{g(\alpha'),\frac{\log 2}{\log A}\})$.
	The lower bound can be obtained by the following auxiliary measure $\mu'$ defined thought $p_n'$ with
	$$p_n'=\begin{cases}
	A^{\beta_1(s_1)}p^{s_1}&\, N_{2i}<n\leq N_{2i+1}\text{ for some } i\in\mathbb{N},\\
	B^{\beta_2(s_2)}q^{s_2} &\, N_{2i+1}<n\leq N_{2i+2} \text{ for some } i\in\mathbb{N},
	\end{cases}$$

	  For the lower bound, Use the same method we can also define the point $(\alpha_0,s_1\alpha_0-\beta_{1}(s_1))$ and $(\alpha_2,s_2\alpha_2-\beta_{2}(s_2))$.
	  We now replace the tangent in the previous item by the line passing through these two points. We assume the line is the graph of the linear function $y=f(x)$.  
	Let us illustrate these relationships by figure~\ref{fig:eab--2}.
	
	The proof is almost the same as before to get that for any $x\in E(\mu,\alpha,\alpha')$ we have $\dl(\mu',x)\leq f(\alpha)$, but we also need to use the inequality 
	$$f(\alpha)\leq \beta_1^*(\alpha).$$
	Also $\beta_2^*(\alpha')$ is a  travail bound since $E(\mu,\alpha,\alpha')\subset \Eu(\mu,\alpha')$.
	So that the upper bound can be obtained. 	
	
\end{itemize}
\item Now we turn to $\dim_P E(\mu,\alpha,\alpha')$. We assume that $\beta_1'(s_1)=\alpha, \beta_2'(s_2)=\alpha'$
\begin{itemize}
	\item $(\alpha,\alpha')\notin (\beta'_1(1),\beta'_1(-\infty)]\times[\beta'_2(+\infty),\beta'_2(1))$:\\
	If $\alpha'\notin [\beta'_2(+\infty),\beta'_2(1))$, let $\beta_2^*(\alpha_0)=\max\{\beta_2^*(\alpha_1):\alpha_1\leq \alpha'\},$ we can assume that $\alpha_0=\beta_2'(s_0)$. Using the same way as the proof of theorem~\ref{main},
	define $$p_n'=\begin{cases}
	q^{s_1} A^{\beta_{1}(s_1)}&\, N_{2i-1}<n\leq N_{2i} \text{ for some } i\in\mathbb{N},\\
	B^{\beta_2(s_0)}p^{s_0}&\, N_{2i}<n\leq N_{2i}'\text{ for some } i\in\mathbb{N},\\
	B^{\beta_2(s)}p^{s}&\, N_{2i}'<n\leq N_{2i+1}\text{ for some } i\in\mathbb{N}.
	\end{cases}$$
	Where $\{N_{2i}'\}_{i\in\mathbb{N}}$ is a sequence of numbers such that
	$N_{2i}<N_{2i}'\leq N_{2i+1}$, $\lim_{i\to\infty}\frac{N_{2i}'}{N_{2i}}=+\infty$ and 
	$\lim_{i\to\infty}\frac{N_{2i+1}}{N_{2i}'}=+\infty$.
	This can be done since $\lim_{i\to \infty}\frac{N_{i+1}}{N_{i}}=+\infty$. We now define a measure $\mu'$ and can prove that 
	$\mu'(E(\mu,\alpha,\alpha'))=1$ and  for $\mu'$-a.e. $x\in X$, $\du(\mu',x)\geq \beta_2^*(\alpha_0)$.
	Then, from \cite[Proposition 2.3]{Falconer1997}, we have $\dim_P E(\mu,\alpha,\alpha')\geq \beta_2^*(\alpha_0).$
	Since $E(\mu,\alpha,\alpha')\subset\Eu(\alpha')$ and  $\dim_P \Eu(\mu,\alpha')= \beta_2^*(\alpha_0),$ we get that $$\dim_P E(\mu,\alpha,\alpha')= \dim_P \Eu(\mu,\alpha').$$
	
	If $\alpha'\in [\beta'_2(+\infty),\beta'_2(1))$ and $\alpha\notin(\beta'_1(1),\beta'_1(-\infty)]$,
	define $$p_n'=\begin{cases}
	p^{s_1}A^{\beta_1(s_1)}&\, N_{2i}<n\leq N_{2i}' \text{ for some } i\in\mathbb{N}\\
	p&\, N_{2i}'<n\leq N_{2i+1} \text{ for some } i\in\mathbb{N},\\
	q&\, N_{2i+1}<n\leq N_{2i+1}'\text{ for some } i\in\mathbb{N},\\
	q^{s_2} B^{\beta_2(s_2)}&\, N_{2i+1}'<n\leq N_{2i+2}\text{ for some } i\in\mathbb{N}.
	\end{cases}$$
	Where $\{N_{i}'\}_{i\in\mathbb{N}}$ is a sequence of numbers such that
	$N_{i}<N_{i}'\leq N_{i+1}$, $\lim_{i\to\infty}\frac{N_{i}'}{N_{i}}=+\infty$ and 
	$\lim_{i\to\infty}\frac{N_{i+1}}{N_{i}'}=+\infty$.
	This can be done since $\lim_{i\to \infty}\frac{N_{i+1}}{N_{i}}=+\infty$.
	
	We now define a measure $\mu'$, a similar method for the lower bound for $\dim_P \Eu(\mu,\alpha')$ and $E(\mu,\alpha,\alpha')\subset \Eu(\mu,\alpha')$ yield 
	$\dim_{P}E(\mu,\alpha,\alpha')=\dim_{P} \Eu(\mu,\alpha')$.
	\item For $\alpha\in(\beta'_1(1),\beta'_1(-\infty)]$ and $\alpha'\in[\beta_2'(+\infty),h(\alpha))$. Recall that we take the tangent to the graph $\beta_2^*$ passing through the point $(\alpha,\beta_1^*(\alpha))$ and denote the point of tangency by $(h(\alpha),\beta_2^*(h(\alpha)))$.
	Then take $s_2\in(1,+\infty]$ such that $\beta_2'(s_2)=h(\alpha)=:\alpha_2$ and 
	$s_1\in [-\infty,1)$ such that $\beta_1'(s_1)=\alpha$.
	$\dim_P E(\mu,\alpha,\alpha')\geq s_2\alpha'-\beta_2(s_2)$ can be obtained by using the following auxiliary measure $\mu'$ though $\{p_n'\}_{n\in\mathbb{N}}$ as the lower bound for $\dim_P \Eu(\mu,\alpha')$.
	$$p_n'=\begin{cases}
	p^{s_1}A^{\beta_1(s_1)}&\, N_{2i}<n\leq N_{2i+1} \text{ for some } i\in\mathbb{N}\\
	q&\, N_{2i+1}<n\leq N_{2i+1}'\text{ for some } i\in\mathbb{N},\\
	q^{s_2} B^{\beta_2(s_2)}&\, N_{2i+1}'<n\leq N_{2i+2}\text{ for some } i\in\mathbb{N}.
	\end{cases}$$
	
	The upper bound is similar with the Hausdorff dimension in the second item, but noticing figure~\ref{fig:eab--3}. We need to show that for any $x\in E(\mu,\alpha,\alpha')$ we have $\du(\mu',x)\leq s_2\alpha'-\beta_2(s_2)$. 
			\begin{figure}[h]  
		\begin{minipage}[t]{0.50\linewidth}  
			\centering  
			\includegraphics[width=0.99\linewidth]{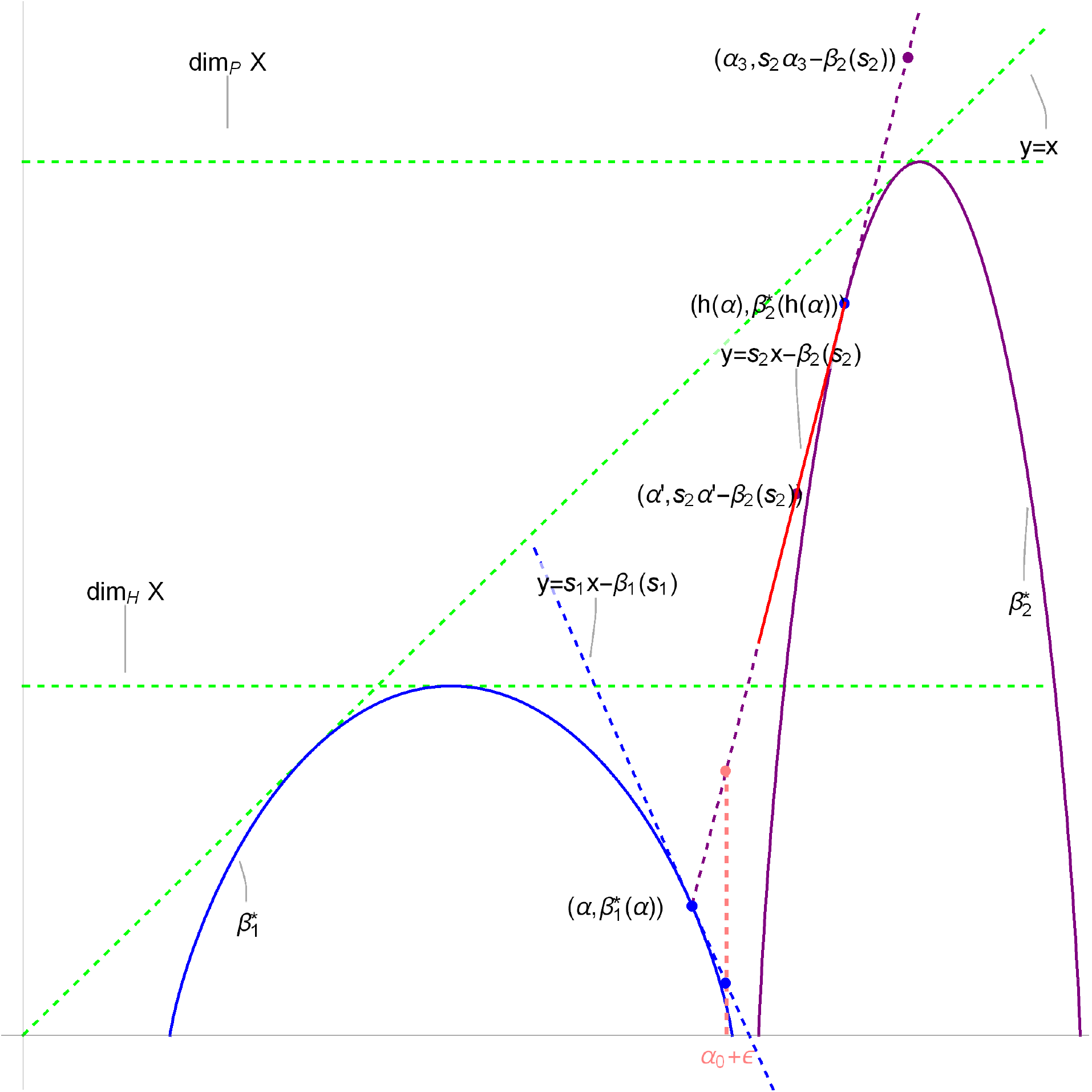}  
			\caption{}  
			\label{fig:eab--3}  
		\end{minipage}
		\begin{minipage}[t]{0.50\linewidth}  
			\centering  
			\includegraphics[width=0.99\linewidth]{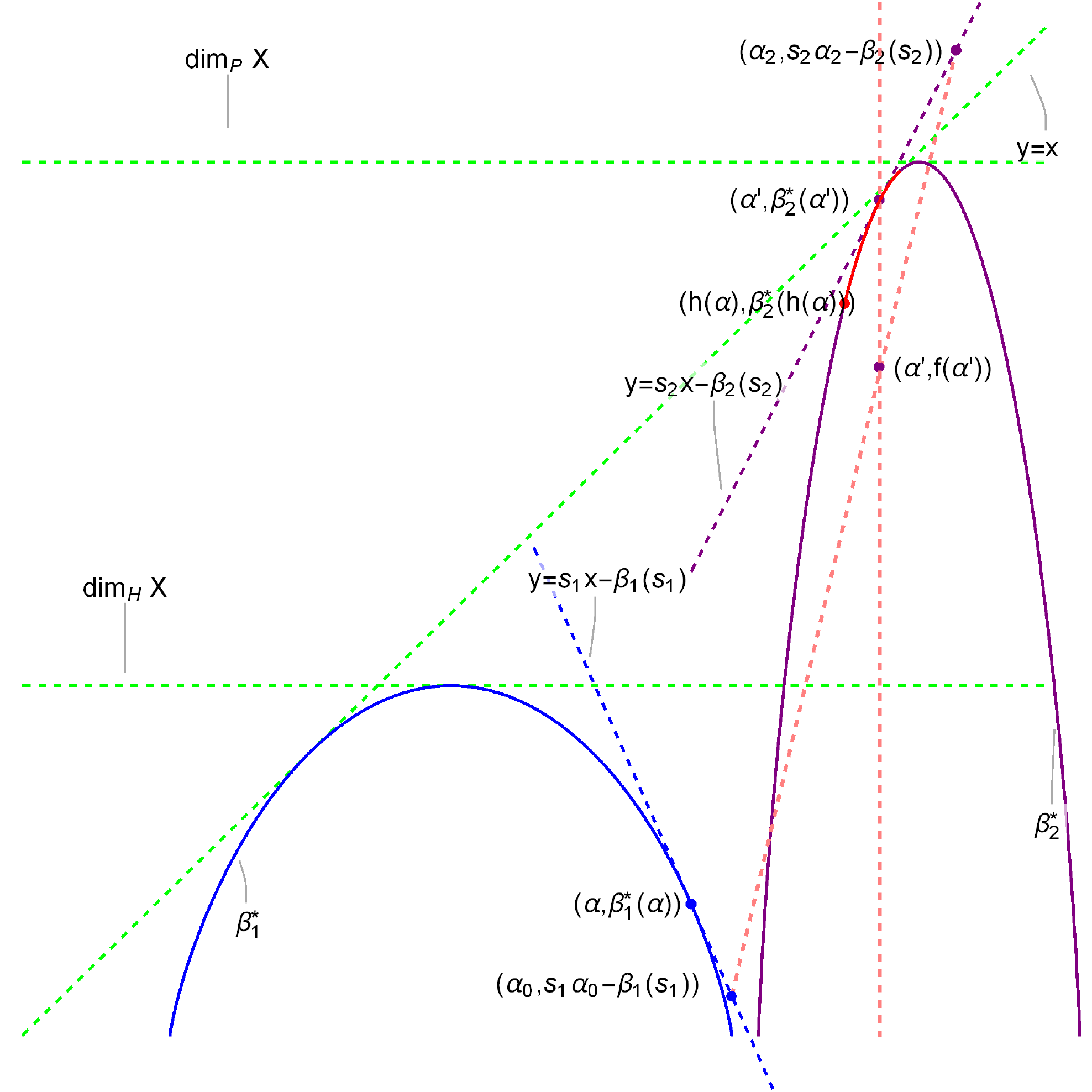}  
			\caption{}  
			\label{fig:eab--4}  
		\end{minipage}  
	\end{figure}	

    Fix $x\in E(\mu,\alpha,\alpha')$, that is for any $\epsilon>0$, there exists $N\in\mathbb{N}$ such that for any $n\geq N$,
    $d(\mu,x,n)\geq \alpha-\epsilon$ and $d(\mu,x,n)\leq \alpha'+\epsilon$.

    Choose $i_0$ with $N_{2i_0+1}>N$ large enough such that for any $i\geq i_0$ one has
    \begin{equation}\label{control mu-packing}
    1\leq\frac{\log \mu(I_{N_{2i+1}}(x))}{\log \mu(I_{N_{2i+1}}(x))-\log \mu(I_{N_{2i}}(x))}\leq 1+\epsilon
    \end{equation}
    \begin{equation}\label{control mu'-packing}
    1\leq\frac{\log \mu'(I_{N_{2i+1}}(x))}{\log \mu'(I_{N_{2i+1}}(x))-\log \mu'(I_{N_{2i}}(x))}\leq 1+\epsilon
    \end{equation}
    \begin{equation}\label{control length-packing}
    1\leq\frac{\log |I_{N_{2i+1}}(x)|}{\log |I_{N_{2i+1}}(x)|-\log |I_{N_{2i}}(x)|}\leq 1+\epsilon
    \end{equation}
    
    For any $n>N_{2i_0+1}$, one has $d(\mu,x,n)\leq \alpha'+\epsilon$, choose the largest $i$ with $N_{2i+1}\leq n$, one has $\alpha_0:=d(\mu,x,N_{2i+1})\geq \alpha-\epsilon$.
    Now we assume that $$\frac{\log \mu(I_{n}(x))-\log \mu(I_{N_{2i+1}}(x))}{\log |I_{n}(x)|-\log |I_{N_{2i+1}}(x)|}=:\alpha_3.$$
    Then we have:
    \begin{equation}\label{under dimloc mu-packing}
    \frac{\alpha_3\log(|I_{N_{2i+1}}(x)|)+\alpha_0(\log |I_{n}(x)|-\log |I_{N_{2i+1}}(x)|)}{\log(|I_{N_{2i+1}}(x)|)+(\log |I_{n}(x)|-\log |I_{N_{2i+1}}(x)|)}\leq \alpha'+\epsilon.
    \end{equation}
    
    Now we turn to estimate $\log \mu'(I_{N_{2i+1}}(x))$. 
    
    First, we notice inequality~\eqref{control mu'-packing} and the definition of $\mu'$, one has 
    \begin{eqnarray*}
    	\log \mu'(I_{N_{2i+1}}(x))&\geq&(\log \mu'(I_{N_{2i+1}}(x))-\log \mu'(I_{N_{2i}}(x)))(1+\epsilon)\\
    	&=&(s_1(\log \mu(I_{N_{2i+1}}(x))-\log \mu(I_{N_{2i}}(x))))(1+\epsilon)\\
    	&&-\beta_1(s_1)(\log |I_{N_{2i+1}}(x)|-\log |I_{N_{2i}}(x)|)(1+\epsilon).
    \end{eqnarray*}
    Second, from inequality~\eqref{control mu-packing} and~\eqref{control length-packing},  one has 
    \begin{eqnarray*}
    	\log \mu'(I_{N_{2i+1}}(x))&\geq&(s_1+|s_1|\epsilon)\log \mu(I_{N_{2i+1}}(x))-(1+\epsilon)\beta_1(s_1)\log |I_{N_{2i+1}}(x)|\\
    	&=&(s_1+|s_1|\epsilon)\alpha_0\log |I_{N_{2i}}(x)|-(1+\epsilon)\beta_1(s_1)\log |I_{N_{2i+1}}(x)|\\
    	&=&((s_1\alpha_0-\beta_1(s_1))+|s_1|\epsilon\alpha_0-\beta_1(s_1)\epsilon)\log |I_{N_{2i}}(x)|
    \end{eqnarray*}
    Third, we notice $\alpha_0\geq \alpha-\epsilon$, i.e. $\alpha_0+\epsilon\geq  \alpha$,  then from figure~\ref{fig:eab--3}, we have 
    $$s_1(\alpha_0+\epsilon)-\beta_1(s_1)\leq s_2(\alpha_0+\epsilon))-\beta_2(s_2)\leq s_2\alpha_0-\beta_2(s_2)+s_2\epsilon.$$
    That is $s_1\alpha_0-\beta_1(s_1)\leq s_2\alpha_0-\beta_2(s_2)+s_2\epsilon+|s_1|\epsilon$.
    So     
    \begin{equation}\label{control mu' N-packing}
    \log \mu'(I_{N_{2i+1}}(x))\geq(s_2\alpha_0-\beta_2(s_2)+(s_2+|s_1|(1+\alpha_0)+\beta_1(s_1))\epsilon)\log |I_{N_{2i+1}}(x)|.
    \end{equation}
    Also from the construction of $\mu'$ and the definition of $\alpha_3$, one has
    \begin{equation}\label{control mu' n-packing}
    \log \mu'(I_{n}(x))-\log \mu'(I_{N_{2i+1}}(x))=(s_2\alpha_3-\beta_2(s_2))(\log |I_{n}(x)|-\log |I_{N_{2i+1}}(x)|)
    \end{equation}
    Then, using a similar method in the proof of inequality \eqref{loc dim mu'1}, but replace ~\eqref{control mu' N},~\eqref{control mu' n} and ~\eqref{under dimloc mu} by ~\eqref{control mu' N-packing},~\ref{control mu' n-packing}and~\eqref{under dimloc mu-packing},we can conclude
    \begin{equation}\label{loc dim mu'2}
    d(\mu',x,n)\leq s_2(\alpha'+\epsilon)-\beta_2(s_2)+s_2\epsilon(1+\epsilon)+(s_1\alpha_3-\beta_1(s_1))\epsilon
    \end{equation}
    for any $n\geq N_{2i_0+1}$. 
    This gives 
    $$\limsup_{n\to\infty}d(\mu',x,n)\leq s_2\alpha'-\beta_2(s_2),$$
    and then $\du(\mu',x)\leq s_2\alpha'-\beta_2(s_2)$ from lemma~\ref{dimloc an}.

	\item For $\alpha\in(\beta'_1(1),\beta'_1(-\infty)]$ and $\alpha'\in[h(\alpha),\beta_2'(1))$.  For the lower bound, use the same method as the previous item, we can also define the point $(\alpha_0,s_1\alpha_0-\beta_{1}(s_1))$ and $(\alpha_2,s_2\alpha_2-\beta_{2}(s_2))$.
	We now replace the tangent in the previous item by the line passing through these two points. We assume the line is the graph of the linear function $y=f(x)$.   
	Let us illustrate these relationships by figure~\ref{fig:eab--4}.
	
	The proof is almost the same as before to get that for any $x\in E(\mu,\alpha,\alpha')$ we have $\du(\mu',x)\leq f(\alpha')$, but we also need to use the inequality 
	$$f(\alpha')\leq \beta_2^*(\alpha').$$
	
 The upper bound can be obtained since $f(\alpha')\leq \beta_2^*(\alpha')$.

For the lower bound, we goes to the auxiliary measure $\mu'$ though $\{p_n'\}_{n\in\mathbb{N}}$ as the lower bound for $\dim_P \Eu(\mu,\alpha')$.
$$p_n'=\begin{cases}
p^{s_1}A^{\beta_1(s_1)}&\, N_{2i}<n\leq N_{2i+1} \text{ for some } i\in\mathbb{N}\\
q^{s_2} B^{\beta_2(s_2)}&\, N_{2i+1}<n\leq N_{2i+2}\text{ for some } i\in\mathbb{N}.
\end{cases}$$	
\end{itemize}
\end{enumerate}
  	
\end{proof}
Let us give some remarks to the proof of theorem~\ref{main2}.
\begin{enumerate}
	\item The Hausdorff dimension: The lower bound can be obtained by a suitable auxiliary measure $\mu'$. Now we turn to the upper bound.
	When $(\alpha,\alpha')\notin [\beta'_1(1),\beta'_1(0)]\times[\beta'_2(+\infty),\beta'_2(1))$, the upper bound can be obtained by $E(\mu,\alpha,\alpha')=\El(\mu,\alpha)\cap\Eu(\mu,\alpha')$ and theorem~\ref{main}.
	But this may not give the sharp upper bound for $\alpha'\in[\beta'_2(+\infty),\beta'_2(1))$, also the methods in \cite{Olsen1995,Barral2015} fail. So we turn to use \cite[Proposition 2.3]{Falconer1997} and calculate the upper bound of $\dl(\mu',x)$ for all $x\in E(\mu,\alpha,\alpha')$ for a suitable measure $\mu'$ to give the sharp upper bound.
	
	This result is reasonable, if we compare with $\dim_{H}(\El(\mu,\alpha))=\min\{\alpha,\frac{\log 2}{\log A}\}$  when $\alpha\in [\beta_{1}'(1),\beta_{1}'(0))$ since we use the optimal tangent $y=x$. But for the situation $E(\mu,\alpha,\alpha')$, it is $y=s_1x-\beta_1(s_1)$. By the way, when $\alpha'$ fulfill $[\beta'_2(+\infty),\beta'_2(1))$, the curve $\dim_H E(\mu,\alpha,\alpha')$ fulfill the gap between $\dim_{H}(\El(\mu,\alpha))$ and $\beta_1^*(\alpha)$ for $\alpha\in [\beta_{1}'(1),\beta_{1}'(0))$. 
	
	\item The packing dimension: It also has a duality with the Hausdorff dimension so there is a similar result. The lower bound can be obtained by a suitable auxiliary measure $\mu'$. Now we turn to the upper bound.
	When $(\alpha,\alpha')\notin [\beta'_1(1),\beta'_1(-\infty)]\times[\beta'_2(+\infty),\beta'_2(1))$, the upper bound can be obtained by $E(\mu,\alpha,\alpha')=\El(\mu,\alpha)\cap\Eu(\mu,\alpha')$ and theorem~\ref{main}.
	But this may not give the sharp upper bound for $\alpha\in[\beta'_1(1),\beta'_1(-\infty)]$, also the methods in \cite{Olsen1995,Barral2015} fail. So we turn to use \cite[Proposition 2.3]{Falconer1997} and calculate the upper bound of $\du(\mu',x)$ for all $x\in E(\mu,\alpha,\alpha')$ for a suitable measure $\mu'$ to give the sharp upper bound.
	
	This result is also reasonable, if we compare with $\dim_{P}(\El(\mu,\alpha'))=\alpha'$  when $\alpha'\in [\beta_2'(+\infty),\beta'_2(1))$ since we use the optimal tangent $y=x$. But for the situation $E(\mu,\alpha,\alpha')$, it is $y=s_2x-\beta_2(s_2)$. By the way, when $\alpha$ fulfill $[\beta'_1(1),\beta'_1(-\infty)]$, the curve $\dim_H E(\mu,\alpha,\alpha')$ fulfill the gap between $\dim_{H}(\El(\mu,\alpha))$ and $\beta_2^*(\alpha)$ for $\alpha'\in [\beta_2'(+\infty),\beta'_2(1))$.      
\end{enumerate} 


\end{document}